\newcommand{\revision}[1]{\textcolor{black}{#1}}
\newcommand{\inblue}[1]{\textcolor{black}{#1}}
\newtheorem{theorem}{Theorem}
\newtheorem{remark}{Remark}
\newcommand{\Prod}[2]{(#1, #2)_{L^2}}
\newcommand{\Vect}[1]{\mathbf{#1}}
\newcommand{\Spl}{\mathcal{S}}
\newcommand{\SplO}{\widetilde{\mathcal{S}}}
\newcommand{\HH}{\mathbf{H}^1_0(\Omega)}
\newcommand{\U}{\Vect{u}}
\newcommand{\F}{\Vect{f}}
\newcommand{\V}{\Vect{v}}
\newcommand{\Div}{\nabla\cdot}
\newcommand{\grad}{\nabla}
\DeclareMathOperator*{\argmin}{arg\,min}
\begin{document}

\begin{frontmatter}

\title{Isogeometric Residual Minimization (iGRM) for \revision{Non-Stationary Stokes and Navier-Stokes} Problems}
\author{M. \L{}o\'{s}$^1$, I. Muga$^2$, J. Mu{\~n}oz-Matute$^3$, M. Paszy\'{n}ski$^1$ }

\address{$^1$Department of Computer Science, \\ AGH University of Science and Technology,
Krakow, Poland \\
e-mail: marcin.los.91@gmail.com\\
e-mail: paszynsk@agh.edu.pl 
}

\address{$^{\textrm{2}}$  Instituto de Matem\'aticas, Pontificia Universidad Cat\'olica de Valpara\'iso, Chile\\
e-mail: ignacio.muga@pucv.cl}

\address{$^3$The University of Basque Country, Bilbao, Spain \\
e-mail: judith.munozmatute@gmail.com}

\begin{abstract}
\revision{We show that it is possible to obtain a linear computational cost FEM-based solver for non-stationary Stokes and Navier-Stokes equations. Our method employs a technique developed by Guermond \& Minev \cite{Minev}, which consists of singular perturbation plus a splitting scheme. While the time-integration schemes are implicit, we use finite elements to discretize the spatial counterparts. At each time-step, we solve a PDE having weak-derivatives in one direction only (which allows for the linear computational cost), at the expense of handling strong second-order derivatives of the previous time step solution, on the right-hand side of these PDEs. This motivates the use of smooth functions such as B-splines. For high Reynolds numbers, some of these PDEs become unstable. To deal robustly with these instabilities, we propose to use a residual minimization technique. We test our method on problems having manufactured solutions, as well as on the cavity flow problem.}
\end{abstract}
	
\begin{keyword}
isogeometric analysis \sep residual minimization \sep non-stationary Stokes \sep Navier-Stokes problem \sep alternating directions \sep linear computational cost solver \end{keyword}

\end{frontmatter}

\section{Introduction}

In this paper, we focus on \revision{non-stationary Stokes and Navier-Stokes problems, which} requires special stabilization effort, for large Reynolds numbers. There are multiple stabilization techniques developed for standard finite element methods \cite{s1,s2,s3,s4,s5}. 
Here, we apply a residual minimization approach for stabilization (see, e.g.,~\cite{evans,VSM}), together with B-spline basis functions from isogeometric analysis (IGA) for the discretization in space.

Minimum residual methods aim to find $u_h\in U_h$ such that
$$
u_h=\arg\!\!\!\min_{w_h\in U_h}\|b(w_h,\cdot) - \ell(\cdot)\|_{V^*}\,,
$$
where $U$ and $V$ are Hilbert spaces, $b:U\times V\to \mathbb R$ is a continuous bilinear (weak) form, $U_h\subset U$ is a discrete trial space, and $\ell\in V^*$ (the dual space of $V$) is a given right-hand side. Several discretization techniques are particular incarnations of this wide-class of residual minimization methods. These include: the \emph{least-squares finite element method}~\cite{LSFEM}, the \emph{discontinuous Petrov-Galerkin method (DPG) with optimal test functions}~\cite{t8}, or the \emph{variational stabilization method}~\cite{VSM}.
We approach the residual minimization \inblue{method} using its saddle point (mixed) formulation, e.g., as described in~\cite{evans}.  

We employ the splitting scheme from Jean-Luc Guermond and Petar Minev \cite{Minev} to express \revision{the non-stationary Stokes or }Navier-Stokes problem as a sequence of pressure predictor, velocity update, and pressure corrector. Each of these steps can be solved in a linear computational cost ${\cal O}(N)$ using the Kronecker product structure of the underlying matrices.
\revision{Instead of using finite differences, we use finite elements for space discretizations.}

The method presented in this paper is an extension of the linear computational cost Kronecker product solver that we proposed for the advection-diffusion problem \cite{Judit}. The algorithm we propose uses the method of lines (discretization in space with time iterations) and delivers a linear computational cost ${\cal O}(N)$ solver for each time step. In this sense, it is an alternative to the space-time formulation \cite{Sangiali}, where an iterative solver is employed. \inblue{In that configuration,} the size of the mesh is equal to $M=N\times k$ \inblue{(for the uniform case)}, where $k$ is the number of time steps. Hence, if $c$ is the number of iterations of the solver, its total cost becomes $M\times c=N\times k\times c$, while our cost is just $N\times k$. However, the space-time formulation allows for adaptation in both space and time, where some parts of the space-time mesh can use ``larger'' time steps \inblue{than} the others. Thus, the case of space-time adaptivity can be indeed competitive to our method. We also differ from \cite{Sangiali} in the sense that we deliver automatic stabilization, with a linear computational cost.

Our paper is also different from \cite{Minev}. Even though they use finite differences discretization with a linear computational cost direction splitting algorithm, they do not incorporate residual minimization stabilization. On the other hand, we discretize the space with B-splines over the patch of elements, and we use a weak form of the splitting scheme. We add the residual minimization procedure in a way that preserves the linear computational cost of the solver. Our method delivers high continuity approximations and automatic stabilization in every time step.

The residual minimization method used here is very similar to the DPG method developed by \cite{DPGStokes}. Our spaces, trial, and test are continuous, while they are broken in the DPG method. The motivation behind breaking the spaces is to obtain a block-diagonal structure of the matrix. Thus, static condensation practically eliminates the inner product matrix, leaving only fluxes or traces over the edges between elements. However, breaking the spaces makes the linear cost factorization impossible since it destroys the Kronecker product structure of the matrix. So does the mesh adaptation. For the DPG method, there are some modern multi-grid solvers developed, allowing for fast factorizations of the system \cite{multigrid}.

\revision{We show that the Navier-Stokes simulation using the alternating directions implicit solver without the residual minimization stabilization generates some unexpected oscillations for high Reynolds number.
In our paper, we use the splitting scheme from Jean-Luc Guermond and Petar Minev \cite{Minev}, which translates the Navier-Stokes problem into the velocity updates involving reaction-dominated diffusion kind of equation, followed by the explicit pressure updates.
Instead of using finite differences, we use standard and higher-order finite elements for space discretizations.
This kind of setup is unstable for high Reynolds number in a similar manner how the reaction-dominated diffusion equations are unstable for a very small diffusion coefficient. We show how the stabilization for high Reynolds number of the Navier-Stokes formulation is performed by the residual minimization method.}

\revision{The novel contributions of our paper with respect to \cite{r1,r2} are the following. In \cite{r1}, the authors propose and study Stokes problem formulation with IGA discretization. They use the Taylor-Hood isogeometric element, and the NURBS basis functions. They augment the paper with 2D and 3D numerical results for the Stokes flow problem. In \cite{r2}, the authors introduce the Raviar-Thomas, Nedelec, and Taylor-Hood elements within the IGA formulation. They test the numerical properties of these formulations on the 2D Stokes problem using simple NURBS geometries.}

\revision{In summary, papers \cite{r1,r2} deal with the stationary Stokes problem, while the main focus of our paper is on the non-stationary Stokes a Navier-Stokes problem.
Our paper shows that for the implicit time integration scheme with non-stationary Stokes or Navier-Stokes equations, it is possible to perform direction splitting, including the residual minimization method. The computational cost of the resulting direct solver is linear. Our paper's main scientific contribution is the linear computational cost solver for non-stationary Stokes and Navier-Stokes problems, stabilized with the residual minimization method. We perform numerical experiments for several trial and test spaces.}

\revision{The common understanding is that the direction splitting solver is limited to tensor product grids. It has been shown in \cite{r4,r5} that this solver can be extended to complicated geometries for finite difference simulations. The investigation on the possibility of applying this method for complicated geometries in the context of the finite element method (classical and isogeometric) will be the subject of our future work, either by building iterative solvers or using similar tricks as presented in \cite{r4,r5}.}

\revision{In our paper, we advocate linear computational cost alternating direction, implicit solver, on a tensor product grids. We show that higher continuity spaces for this kind of solvers reduce the computational cost of the simulations, maintaining high numerical accuracy.
However, the standard finite element spaces can be augmented with $hp$-adaptivity, as presented in DPG framework \cite{t8}, resulting in exponential convergence of the numerical error with respect to the mesh size. In this paper, we advocate an alternative approach, and we are aware that higher continuity spaces are not suitable for $hp$ adaptivity.}

The structure of this paper is as follows. \inblue{We describe in Section 2 the residual minimization method in an abstract way, and we introduce the functional settings that we use in the paper. Section 3 introduces the non-stationary Stokes and Navier-Stokes problems, together with alternating direction splitting and residual minimization methods. Finally, Section 4 presents numerical results using manufactured solutions and the cavity flow problem. We conclude the paper in Section 5.}

\section{Abstract settings}
\subsection{The residual minimization method}
\inblue{Let $U$ and $V$ be Hilbert spaces and let $b:U\times V\to \mathbb R$ be a continuous bilinear form. Given $\ell\in V^*$\footnote{$V^*$ denotes the dual space of $V$, i.e., the space of linear and continuous functionals over $V$.}, an abstraction of the variational problems that we will find throughout this paper is:} 
\begin{equation}
\left\{
\begin{array}{l}
\text{Find $u\in U$ such that}\\
b(u,v)=\ell(v), \quad \forall v\in V.
\end{array}\right.
\label{eq:gen_weak}
\end{equation}
\inblue{We assume that problem~\eqref{eq:gen_weak} is well-posed in the sense that for any $\ell\in V^*$, there exists a unique solution $u\in U$, and $\|u\|_U\lesssim\|\ell\|_{V^*}=\|b(u,\cdot)\|_{V^*}$ with a (stability) constant that only depends on $b(\cdot,\cdot)$. Let $U_h\subset U$ be a discrete subspace where we want to approximate the solution of~\eqref{eq:gen_weak}. If $u_h\in U_h$ is such an approximation, we define its correspondent residual as the expression $\ell(\cdot)-b(u_h,\cdot)\in V^*$. Notice that the exact solution of~\eqref{eq:gen_weak} has zero residual. Therefore, the residual minimization method aims to find $u_h\in U_h$ minimizing the $V^*$-norm of the correspondent residual, i.e., 
\begin{equation}\label{eq:ResMin}
u_h=\argmin_{w_h\in U_h}\|\ell(\cdot)-b(w_h,\cdot)\|_{V^*}.
\end{equation}
It is well-known that problem~\eqref{eq:ResMin} has a unique solution (we are minimizing a strictly convex functional over a convex set, and the map $w_h\mapsto b(w_h,\cdot)\in V^*$ is injective). Unfortunately, the computation of the dual norm $\|\cdot\|_{V^*}$ requieres to solve the Riesz map inversion:\footnote{Indeed, $\|\ell(\cdot)-b(u_h,\cdot)\|_{V^*}=\|r\|_V$.}
\begin{equation}
\left\{
\begin{array}{l}
\text{Find $r\in V$ such that}\\
(r,v)_V =\ell(v) -   b(u_h,v), \quad \forall v\in V,
\end{array}\right.
\label{eq:Riesz}
\end{equation}
where $(\cdot,\cdot)_V$ denotes the inner product of the Hilbert space $V$. Such a computation is unfeasible unless $V$ has a finite dimension. Since this is not the general situation, we must consider a discrete counterpart $V_h\subset V$, which leads to the discrete residual minimization problem:\footnote{For the sake of simplicity, we have kept the same notation for the minimizer $u_h\in U_h$.}
\begin{equation}\label{eq:dResMin}
u_h=\argmin_{w_h\in U_h}\|\ell(\cdot)-b(w_h,\cdot)\|_{(V_h)^*}.
\end{equation}
However, problem~\eqref{eq:dResMin} may not be unique now, since two different elements of $U_h$, say $w_h$ and $\tilde w_h$, may satisfy $b(w_h,\cdot)=b(\tilde w_h,\cdot)$ in $(V_h)^*$. To avoid this drawback a \emph{discrete inf-sup condition} is needed, i.e., $V_h$ must be such that:
\begin{equation}\label{eq:dInf-Sup}
\|b(w_h,\cdot)\|_{(V_h)^*}=\sup_{v_h\in V_h}{b(w_h,v_h)\over \|v_h\|_V}\geq \gamma_h \|w_h\|_U, \quad\forall w_h\in U_h,
\end{equation}
for some \emph{discrete inf-sup} (stability) constant $\gamma_h>0$.
\begin{remark}
A space $V_h\subset V$ satisfying~\eqref{eq:dInf-Sup} always exists. Indeed, the theory of optimal test functions developed by Demkowicz \& Gopalakrishnan~\cite{t8} establishes the existence of an optimal test space $V_h^\text{opt}\subset V$, of the same dimension of $U_h$, satisfying~\eqref{eq:dInf-Sup} with the same stability constant of the continuous problem~\eqref{eq:gen_weak} (cf.~\cite{Schaback}). Notice that, contrary to standard \revision{Petrov-Galerkin} methods, there is no restriction here about the dimension of $V_h$ needed to satisfy~\eqref{eq:dInf-Sup}.
\end{remark}
Let us focus now on the practical way to compute~\eqref{eq:dResMin}. The standard way to do so is to solve the mixed (saddle point) problem to find $(r_h,u_h)\in V_h\times U_h$ such that (cf.~\cite{evans,VSM}):
\begin{equation}\label{eq:mixed}
\left\{
\begin{array}{lll}
(r_h,v_h)_V+b(u_h,v_h) & = \ell(v_h), & \forall v_h\in V_h,\\
b(w_h,r_h) & = 0, & \forall w_h\in U_h.
\end{array}\right.
\end{equation}
Notice that the first equation in~\eqref{eq:mixed} computes the residual representative (same as in eq.~\eqref{eq:Riesz}), while the second equation establishes the optimality condition, i.e., the residual representative $r_h\in V_h$ has to be orthogonal to the approximation space $\{b(w_h,\cdot)\in (V_h)^*: w_h\in U_h\}$. We end up this section with a theorem that establishes the well-posedness of system~\eqref{eq:mixed}, as well as {\it a priori} estimates.
\begin{theorem}
If $U_h\subset U$ and $V_h\subset V$ are discrete spaces such that the discrete inf-sup condition~\eqref{eq:dInf-Sup} is satisfied, then the mixed problem~\eqref{eq:mixed} has a unique solution $(r_h,u_h)\in V_h\times U_h$. Moreover,
\begin{alignat}{2}
\|r_h\|_V\leq & \|\ell\|_{V^*}\label{eq:apriori1}\\
\|u_h\|_U\leq & {1\over \gamma_h}\|\ell\|_{V^*}= {1\over \gamma_h}\|b(u,\cdot)\|_{V^*}
\leq{M_b\over\gamma_h}\|u\|_U\label{eq:apriori2}\\
\|u-u_h\|_U\leq & {M_b\over\gamma_h} \inf_{w_h\in U_h}\|u-w_h\|_U, \label{eq:apriori3}
\end{alignat}
where $u\in U$ is the exact solution of problem~\eqref{eq:gen_weak} and $M_b>0$ is such that $|b(w,v)|\leq M_b\|w\|_U\|v\|_V$, for any $w\in U$ and $v\in V$. 
\end{theorem}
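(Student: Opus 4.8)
The plan is to treat~\eqref{eq:mixed} as a finite-dimensional \emph{square} linear system (the unknowns $(r_h,u_h)$ live in $V_h\times U_h$, and the test functions $(v_h,w_h)$ range over the same space), so that \emph{existence} follows once I establish \emph{uniqueness}, i.e. injectivity. To that end I would set $\ell\equiv 0$ and show the only solution is trivial: testing the first equation of~\eqref{eq:mixed} with $v_h=r_h$ and the second with $w_h=u_h$, the cross term $b(u_h,r_h)$ cancels and I am left with $\|r_h\|_V^2=0$, hence $r_h=0$; feeding $r_h=0$ back into the first equation gives $b(u_h,v_h)=0$ for all $v_h\in V_h$, so $\|b(u_h,\cdot)\|_{(V_h)^*}=0$, and the discrete inf--sup condition~\eqref{eq:dInf-Sup} forces $u_h=0$.

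For the a priori bounds~\eqref{eq:apriori1}--\eqref{eq:apriori2} I would recast~\eqref{eq:mixed} in terms of Riesz representatives. Let $\ell_h\in V_h$ represent $\ell|_{V_h}$ and, for each $w_h\in U_h$, let $B^* w_h\in V_h$ represent $b(w_h,\cdot)\in(V_h)^*$, so that $\|B^* w_h\|_V=\|b(w_h,\cdot)\|_{(V_h)^*}$. The first equation then reads $\ell_h=r_h+B^* u_h$, while the optimality equation says precisely that $r_h$ is $V$-orthogonal to the range of $B^*$, which contains $B^* u_h$. The Pythagoras identity $\|\ell_h\|_V^2=\|r_h\|_V^2+\|B^* u_h\|_V^2$ then bounds both pieces by $\|\ell_h\|_V=\|\ell\|_{(V_h)^*}\le\|\ell\|_{V^*}$: the first piece is exactly~\eqref{eq:apriori1}, and combining $\|B^* u_h\|_V\le\|\ell\|_{V^*}$ with~\eqref{eq:dInf-Sup} gives $\gamma_h\|u_h\|_U\le\|\ell\|_{V^*}$. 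The remaining relations in~\eqref{eq:apriori2} are immediate: since $u$ solves~\eqref{eq:gen_weak}, $\ell=b(u,\cdot)$ as elements of $V^*$ (hence equal norms), and continuity of $b$ gives $\|b(u,\cdot)\|_{V^*}\le M_b\|u\|_U$. Exploiting this orthogonal splitting, rather than applying a triangle inequality to $b(u_h,v_h)=\ell(v_h)-(r_h,v_h)_V$, is what removes a spurious factor and yields the sharp constant $1/\gamma_h$.

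For the quasi-optimality estimate~\eqref{eq:apriori3}, the key structural observation is that the solution map $P\colon U\to U_h$, $u\mapsto u_h$ (with data $\ell=b(u,\cdot)$), is a linear projection onto $U_h$: it is linear because~\eqref{eq:mixed} is, its range is $U_h$, and it reproduces $U_h$ because for $u\in U_h$ the pair $(0,u)$ already solves~\eqref{eq:mixed}, whence $P^2=P$ by uniqueness. Estimate~\eqref{eq:apriori2} controls its norm, $\|P\|\le M_b/\gamma_h$. Then, for any $w_h\in U_h$, one has $u-u_h=(I-P)u=(I-P)(u-w_h)$, so $\|u-u_h\|_U\le\|I-P\|\,\|u-w_h\|_U$, and taking the infimum over $w_h\in U_h$ gives the claim once $\|I-P\|$ is bounded.

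The main obstacle is obtaining the sharp constant $M_b/\gamma_h$ rather than $1+M_b/\gamma_h$. Estimating $\|u_h-w_h\|_U$ directly through~\eqref{eq:dInf-Sup} and then using the triangle inequality only gives the latter. The sharp bound relies on the identity $\|I-P\|=\|P\|$, valid for any nontrivial projection on a Hilbert space (the Kato / Xu--Zikatanov lemma), which upgrades $\|I-P\|$ to $\|P\|\le M_b/\gamma_h$; the degenerate cases $U_h=\{0\}$ and $U_h=U$ are trivial and are handled separately.
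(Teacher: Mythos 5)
Your proof is correct and complete. The paper itself offers no inline proof---it simply defers to \cite{BroSteCAMWA2014} (Propositions 2.2 and 2.3)---and your argument (injectivity of the square finite-dimensional system for well-posedness, the Riesz-representative decomposition $\ell_h=r_h+B^*u_h$ with Pythagoras for the a priori bounds, and the Kato/Xu--Zikatanov projection identity $\|I-P\|=\|P\|$ for the sharp constant $M_b/\gamma_h$ in the quasi-optimality estimate) is essentially the standard proof given in that reference and throughout the residual-minimization/DPG literature, so there is nothing to add beyond noting that your handling of the degenerate cases $U_h=\{0\}$ and $U_h=U$ is the right precaution for invoking Kato's identity \cite{kato}.
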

\begin{proof}
\revision{See, e.g.,~\cite[Propositions 2.2 and 2.3]{BroSteCAMWA2014}.}
\end{proof}
}

\subsection{Functional spaces}\label{sec:functional}
\inblue{
Let $\Omega\subset\mathbb R^2$ be a bounded Lipschitz domain and let $L^2(\Omega)$ be the Hilbert space of square integrable functions over $\Omega$, endowed with its standard inner product $(\cdot,\cdot)_{L^2}$. We will use bold fonts to denote vector fields. Likewise, if $\F=(f_1,f_2)$ and  $\U=(u_1,u_2)$ are functions in ${\mathbf L}^2(\Omega):=L^2(\Omega)\times L^2(\Omega)$, then 
$(\F,\U)_{L^2}:=(f_1,u_1)_{L^2}+(f_2,u_2)_{L^2}$.
The notation $L_0^2(\Omega)$ will refer to the closed subspace of $L^2(
\Omega)$ of functions with zero mean, while $H_0^1(\Omega)$ will denote the Sobolev space of square integrable functions having square integrable first-oder weak-derivatives and zero trace over the boundary $\partial\Omega$. Moreover, we consider the space $\HH: = H^1_0(\Omega) \times H^1_0(\Omega)$ endowed with the semi-inner product 
$\Prod{\nabla\V}{\nabla \U} := \Prod{\nabla v_1}{\nabla u_1} + \Prod{\nabla v_2}{\nabla u_2}$,
for any $\V,\U\in\HH$.}

\inblue{
For direction splitting purposes, we will need one-way-derivative spaces. Thus, given a unitary vector $\boldsymbol\beta\in\mathbb R^2$, we define the graph space $V_{\boldsymbol\beta}:=\{u\in L^2(\Omega): \boldsymbol\beta\cdot\nabla u\in L^2(\Omega)\}$, endowed with the inner product $(v,u)_{V_{\boldsymbol\beta}}:=(v,u)_{L^2}+(\boldsymbol\beta\cdot\nabla v,\boldsymbol\beta\cdot\nabla u)_{L^2}$. It is well-known (see, e.g.,~\cite{DiPietroErn}) that this graph space has well-defined traces within the space $L^2(|{\boldsymbol\beta\cdot\boldsymbol n}|;\partial\Omega):=\{u \text{ measurable on }\partial\Omega: \int_{\partial\Omega}|{\boldsymbol\beta\cdot\boldsymbol n}|u^2<+\infty\}$, where $\boldsymbol n$ denotes the outer-poiting normal vector. Let $V_{\boldsymbol\beta,0}:=\{u\in V_{\boldsymbol\beta}:({\boldsymbol\beta\cdot\boldsymbol n})u=0 \text{ over } \partial\Omega\}$ the closed subspace of zero trace functions. For $\boldsymbol \beta=\boldsymbol i=(1,0)$ and $\boldsymbol \beta=\boldsymbol j=(0,1)$, we will use the spaces
$V_{\boldsymbol i}$ and $V_{\boldsymbol j}$, together with the spaces $\mathbf V_{\boldsymbol i,0}:=V_{\boldsymbol i,0}\times V_{\boldsymbol i,0}$ and  $\mathbf V_{\boldsymbol j,0}:=V_{\boldsymbol j,0}\times V_{\boldsymbol j,0}$ (see Section~\ref{sec:N-S}).
}

\inblue{
Our discrete functional spaces will be based on the space $\Spl^{m,k}$ of 2D B-splines of degree~$m\in\mathbb N$ and global continuity~$k\in\mathbb N$. Accordingly,
$\Spl^{m,k}_0 \subset \Spl^{m,k}$ will denote the subspace without boundary degrees of freedom (i.e., $\Spl^{m,k}_0 = \Spl^{m,k} \cap H^1_0(\Omega)$), and
$\SplO^{m,k}\subset\Spl^{m,k}$ will denote the subspace where only one degree of freedom  has been removed from the boundary. }

\section{Time-dependent Stokes and Navier-Stokes problems}\label{sec:N-S}

Let \inblue{$\Omega=\Omega_x\times\Omega_y$} be a square domain and $I=(0,T]\subset\mathbb{R}$. We consider the following two-dimensional Navier-Stokes equation:
\begin{equation}\label{strong}
\displaystyle{ \left\{
\begin{split}
\partial_{t}\mathbf{v}+\inblue{(\mathbf{v}\cdot \grad) \mathbf{v}-\frac{1}{R_e}}\Delta\mathbf{v}+\nabla p=&\;\mathbf{f}&\mbox{in}&\;\Omega\times I,\\
\grad\cdot\mathbf{v}=&\;0&\mbox{in}&\;\Omega\times I,\\
\mathbf{v}=&\;0&\mbox{in}&\;\Gamma\times I,\\
\mathbf{v}(0)=&\;\mathbf{v}_{0}\;&\mbox{in}&\;\Omega,\\
\end{split}
\right.} 
\end{equation}
where $\mathbf{v}=(v_{1},v_{2})$ is the unknown velocity field and $p$ is the unknown pressure. Here, $\Gamma=\Gamma_{x}\cup\Gamma_{y}$ denotes the boundary of the spatial domain $\Omega$, $\mathbf{f}$ is a given source, $\mathbf{v}_{0}$ is a given initial condition, and
\inblue{$R_e$ stands for the dimensionless Reynolds number. When we drop the advection term $(\mathbf{v} \cdot \grad) \mathbf{v}$, we obtain the non-stationary Stokes problem, and for this case, the notion of Reynolds number does not make sense since there is no physical concept of Reynolds number in Stokes flows. All Stokes flows with different diffusion constants are self-similar under a simple rescaling of time and pressure, or alternatively, rescaling the domain and pressure. Thus, in the numerical results section, when we consider the non-stationary Stokes problem, we remove the Reynolds number from the equations.}

Following \cite{Minev}, we consider the following singular perturbation of problem (\ref{strong}):
\begin{equation}\label{perturbed}
\displaystyle{ \left\{
\begin{split}
\partial_{t}\mathbf{v}_{\epsilon}\inblue{+(\mathbf{v}_{\epsilon}\cdot \grad) \mathbf{v}_{\epsilon}-{1\over{R_e}}}\Delta\mathbf{v}_{\epsilon}+\nabla p_{\epsilon}=&\;\mathbf{f}&\mbox{in}&\;\Omega\times I,\\
\epsilon A\phi_{\epsilon}+\grad\cdot\mathbf{v}_{\epsilon}=&\;0&\mbox{in}&\;\Omega\times I,\\
\epsilon\partial_{t}p_{\epsilon}=\;\phi_{\epsilon}-\chi\inblue{\frac{1}{R_e}}\grad\cdot& \mathbf{v}_{\epsilon}&\mbox{in}&\;\Omega\times I,\\
\mathbf{v}_{\epsilon}=&\;0&\mbox{in}&\;\Gamma\times I,\\
\mathbf{v}_{\epsilon}(0)=&\;\mathbf{v}_{0}\;&\mbox{in}&\;\Omega,\\
p_{\epsilon}(0)=&\;p_{0}\;&\mbox{in}&\;\Omega,\\
\end{split}
\right.} 
\end{equation}
where $A: D(A)\subset L^{2}_{0}(\Omega)\longrightarrow L^{2}_{0}(\Omega)$ is an unbounded operator, $\phi_{\epsilon}\in D(A)$, $\epsilon$ is the perturbation parameter and $\chi\in[0,1]$ is a user defined parameter. 
\inblue{As in \cite{Minev}, we are going to use implicit time-integration schemes, while the non-linear advection term $(\mathbf{v}_{\epsilon}\cdot \grad) \mathbf{v}_{\epsilon} $ is treated explicitly. }

\subsection{Alternating Direction Implicit (ADI) method}
We consider the ADI method presented in \cite{Minev}, with the Peaceman-Rarchford scheme \cite{ADS1,ADS2} applied to the velocity update. First, we perform an uniform partition of the time interval $\bar{I}=[0,T]$, \inblue{i.e., 
$0=t_{0}<t_{1}<\ldots<t_{N-1}<t_{N}=T$, where $\tau={T\over N}$ denotes the uniform time-step.}
In (\ref{perturbed}), we select $\epsilon=\tau$ and $A=(1-\partial^2_{x})(1-\partial^2_{y})$. The scheme reads as follows:
\begin{itemize}
\item \textit{Pressure predictor.} 
We set $\tilde{p}^{n+\frac{1}{2}}=p^{n-\frac{1}{2}}+\phi^{n-\frac{1}{2}},\;\forall n=0,\ldots,N-1$ being $p^{-\frac{1}{2}}=p_{0}$ and $\phi^{-\frac{1}{2}}=0$.
\item \textit{Velocity update.} We set $\mathbf{v}^{0}=\mathbf{v}_{0}$ and solve:
\begin{equation}\label{ADIv1}
\left\{
\begin{array}{rl}
\displaystyle\left(1-\frac{\tau\partial^2_{x}}{2R_e}\right)\mathbf{v}^{n+\frac{1}{2}}
= & \displaystyle\left(1+\frac{\tau\partial^2_{y}}{2R_e}\inblue{-(\mathbf{v}^{n}\cdot\grad)}\right)\mathbf{v}^{n}
-\frac{\tau}{2}\grad\tilde{p}^{n+\frac{1}{2}}+\frac{\tau}{2}\mathbf{f}^{n+\frac{1}{2}}
,\\
\mathbf{v}^{n+\frac{1}{2}}= & 0\quad \mbox{in }\Gamma_{x}.
\end{array}\right.
\end{equation}
\begin{equation}\label{ADIv2}
\left\{
\begin{array}{rl}
\displaystyle \left(1-\frac{\tau\partial^2_{y}}{2R_e}\right)\mathbf{v}^{n+1}
= &\displaystyle \left(1+\frac{\tau\partial^2_{x}}{2R_e}\right)\mathbf{v}^{n+\frac{1}{2}}
-\frac{\tau}{2}\grad\tilde{p}^{n+\frac{1}{2}}+\frac{\tau}{2}\mathbf{f}^{n+\frac{1}{2}},\\
\mathbf{v}^{n+1}= & 0\quad\mbox{in }\Gamma_{y}.
\end{array}\right.
\end{equation}
\item \textit{Penalty step.}\\
\begin{equation}\label{ADIpp}
\left\{
\begin{array}{rl}
(1-\partial^2_x)\psi
= & -\displaystyle\frac{1}{\tau}\grad\cdot\mathbf{v}^{n+1},\\
\partial_{x}\psi= & 0\quad\mbox{in }\Gamma_{x}.
\end{array}\right.
\quad\text{ and}\quad
\left\{
\begin{array}{rl}
(1-\partial^2_y)\phi^{n+\frac{1}{2}}
= & \psi,\\
\partial_{y}\phi^{n+\frac{1}{2}}= & 0\quad\mbox{in }\Gamma_{y}.
\end{array}\right.
\end{equation}
\item \textit{Pressure update.}\\
\begin{equation}\label{ADIp}
p^{n+\frac{1}{2}}=p^{n-\frac{1}{2}}+\phi^{n+\frac{1}{2}}-\chi \inblue{\frac{1}{R_e}} \grad\cdot\left(\frac{1}{2}(\mathbf{v}^{n+1}+\mathbf{v}^{n})\right).
\end{equation}
\end{itemize}

\subsection{Variational formulations}
\inblue{Recall the graph spaces $V_{\boldsymbol\beta}$ and $\mathbf V_{\boldsymbol\beta,0}$ from Section~\ref{sec:functional}, and denote by $(\cdot,\cdot)$ both $\mathbf{L}^{2}(\Omega)$ and $L^{2}(\Omega)$ inner products. Let $\boldsymbol i=(1,0)$ and $\boldsymbol j=(0,1)$. Equation~\eqref{ADIv1} admits the following variational formulation:}
\begin{equation}\label{testADIv1}
\left\{
\begin{array}{l}
\text{Find } \mathbf{v}^{n+\frac{1}{2}}\in\mathbf V_{\boldsymbol i,0} \text{ such that}\\
\displaystyle(\mathbf{v}^{n+\frac{1}{2}},\mathbf{u})+\tau{(\partial_{x}\mathbf{v}^{n+\frac{1}{2}},\partial_{x}\mathbf{u})\over2R_e}= \left(\mathbf{v}^{n}+\frac{\tau}{2}\left[{\partial^2_{y}\mathbf{v}^{n}\over R_e}-(\mathbf{v}^{n}\cdot\grad) \mathbf{v}^{n}-\grad\tilde{p}^{n+\frac{1}{2}}+\mathbf{f}^{n+\frac{1}{2}}\right],\mathbf{u}\right)
\\
\forall\,\mathbf u\in \mathbf V_{\boldsymbol i,0}.
\end{array}\right.
\end{equation}
Equation~\eqref{ADIv2} admits the following variational formulation:
\begin{equation}\label{testADIv2}
\left\{
\begin{array}{l}
\text{Find } \mathbf{v}^{n+1}\in\mathbf V_{\boldsymbol j,0} \text{ such that}\\
\displaystyle(\mathbf{v}^{n+1},\mathbf{u})+\tau{(\partial_{y}\mathbf{v}^{n+1},\partial_{y}\mathbf{u})\over 2R_e}= \left(\mathbf{v}^{n+\frac{1}{2}}+\frac{\tau}{2}\left[{\partial^2_{x}\mathbf{v}^{n+\frac{1}{2}}\over R_e}-\grad\tilde{p}^{n+\frac{1}{2}}+\mathbf{f}^{n+\frac{1}{2}}\right],\mathbf{u}\right)\\
\forall\,\mathbf u\in \mathbf V_{\boldsymbol j,0}.
\end{array}\right.
\end{equation}
Finally, equations~\eqref{ADIpp} admit the following variational formulations:
\begin{equation}\label{testADIpsi}
\left\{
\begin{array}{l}
\text{Find } \psi\in V_{\boldsymbol i} \text{ such that}\\
\displaystyle(\psi,w)+(\partial_{x}\psi,\partial_{x}w)=-\frac{1}{\tau}(\grad\cdot\mathbf{v}^{n+1},w),
\quad\forall w\in V_{\boldsymbol i}.
\end{array}\right.
\end{equation}
and
\begin{equation}\label{testADIphi}
\left\{
\begin{array}{l}
\text{Find }\phi^{n+\frac{1}{2}}\in V_{\boldsymbol j} \text{ such that}\\
(\phi^{n+\frac{1}{2}},w)+(\partial_{y}\phi^{n+\frac{1}{2}},\partial_{y}w)=(\psi,w),
\quad\forall w\in V_{\boldsymbol j}.
\end{array}\right.
\end{equation}
\inblue{
Notice that all the previous variational formulations are coercive in their respective graph space. 
The presented \revision{singular perturbation} and the splitting scheme will lead to a set of well-posed equations with just Galerkin discretization, without needing the more expensive iGRM discretization. However, high Reynolds numbers $R_e>0$ translate into a loss of stability in formulations~\eqref{testADIv1} and~\eqref{testADIv2}. So the true value of the iGRM stabilization appears when $R_e>>1$}.
\inblue{
\subsection{iGRM stabilization}
We show here how to discretize the previous variational formulations in the sense of residual minimization. As usual, we select tensor product B-splines basis functions. In formulations~\eqref{testADIv1} and~\eqref{testADIv2} we use $\HH$-conforming basis functions to make sure that the boundary condition in~\eqref{perturbed} is satisfied over the whole boundary $\Gamma$ at any time-step.  Let $M^x_{mk}$-$M^y_{mk}$, $K^x_{mk}$-$K^y_{mk}$ and $A^x_{mk}$-$A^y_{mk}$ be 1D mass, stiffness and advection matrices, respectively\footnote{The supra-index ($x$ or $y$) indicates the variable that has been integrated, while the sub-indexes $m$ and $k$ indicates test and trial orders of approximation, respectively.}. 
In formulation~\eqref{testADIv1} we enrich the test space in the $x$-variable only, by increasing the polynomial order or decreasing the continuity in that direction. We get a system of the form:
\begin{equation}\label{vRMsys}
\begin{bmatrix}
\mathbf{G}_x&-\mathbf{B}_x\\
\mathbf{B}_x^{T}&0\\
\end{bmatrix}
\begin{bmatrix}
\mathbf{r}\\\mathbf{v}^{n+{1\over2}}
\end{bmatrix}
=
\begin{bmatrix}
-\mathbf{L}_{x}\\0 
\end{bmatrix}.
\end{equation}}
%
%
\inblue{The $\mathbf{G}_x$ matrix corresponds to the Gram matrix associated with the inner product in $\mathbf{V}_{\boldsymbol i,0}$, which only consider derivatives in the enriched $x$-direction, i.e., 
$$
\mathbf{G}_x =\begin{bmatrix}M_{mm}^{x}+K_{mm}^{x}&0\\0&M_{mm}^{x}+K_{mm}^{x}\end{bmatrix}\otimes M_{kk}^{y}.
$$
The $\mathbf{B}_x$ matrix corresponds to the left hand side of~\eqref{testADIv1}, which mixes the integration of trial and test basis functions, i.e.,  
$$
\mathbf{B}_x =
\begin{bmatrix}M^{x}_{mk}+\frac{\tau}{2}K^{x}_{mk}&0\\0&M^{x}_{mk}+\frac{\tau}{2}K^{x}_{mk}\end{bmatrix}\otimes M^{y}_{kk}
$$}
\begin{remark}
\inblue{Notice that $M^y_{kk}$ is symmetric, which combined with the Kronecker product
property $(C\otimes M)^{T}=C^{T}\otimes M^{T}$, it implies that the whole matrix on the left-hand side of~\eqref{vRMsys} can be factorized as a matrix of integrations in the $x$-variable, times $M^y_{kk}$. This is the essential property that allows us to obtain a linear cost solver.}
\end{remark}
The vector $\mathbf{L}_x$ on the right-hand side of equation~\eqref{vRMsys} corresponds to the right-hand side of the variational formulation~\eqref{testADIv1}. It has the form:%
\begin{equation}\label{eq:L_x}
\begin{array}{rl}
\mathbf{L}_x = & 
 \begin{bmatrix}M^{x}_{mk}\otimes(M^{y}_{kk}-\frac{\tau}{2}K^{y}_{kk}) &0\\0&M^{x}_{mk}\otimes(M^{y}_{kk}-\frac{\tau}{2}K^{y}_{kk})\end{bmatrix}\begin{bmatrix}v_{1}^{n}\\v_{2}^{n}\end{bmatrix}\\\\
&
  \inblue{-\begin{bmatrix}
  (v_1^n)^T N^x v_1^n + (v_2^n)^T N^y v_1^n \\
  (v_1^n)^T N^x v_2^n + (v_2^n)^T N^y v_2^n
  \end{bmatrix}}\\\\ &
  -\displaystyle\frac{\tau}{2}
\begin{bmatrix}A^{x}_{mk}\otimes M^{y}_{kk}&0\\0&M^{x}_{mk}\otimes A^{y}_{kk}\end{bmatrix}\begin{bmatrix}\tilde{p}^{n+\frac{1}{2}}\\\tilde{p}^{n+\frac{1}{2}}\end{bmatrix}
+\frac{\tau}{2}\begin{bmatrix}F_{1}^{n+\frac{1}{2}}\\F_{2}^{n+\frac{1}{2}}\end{bmatrix}.
\end{array}
\end{equation}
\inblue{where $N^x$ and $N^y$ are 3rd-order tensors with coefficients 
$$
\begin{array}{rl}
N^x_{(ij)(pq)(rs)} & =\displaystyle \int_{\Omega} \tilde B_i(x)\tilde B_j(y)\, B_p(x) B_q(y)\,  \partial_x\tilde B_r(x)\tilde B_s(y)\,dxdy,\\\\
N^y_{(ij)(pq)(rs)} & =\displaystyle \int_{\Omega} \tilde B_i(x)\tilde B_j(y)\, B_p(x) B_q(y)\,  \tilde B_r(x)\partial_y\tilde B_s(y)\,dxdy.
 \end{array}
$$
The tilde notation $\tilde B$ indicates one dimensional B-spline related to trial basis functions (otherwise is a one-dimensional test B-spline). Observe that these coefficients respect a Kronecker product structure, which allows a quick calculation of them.
}

\begin{remark}
\inblue{Notice that the second-order derivative in the right hand side of~\eqref{testADIv1} has been treated \emph{weakly} in~\eqref{eq:L_x}, by means of integration by parts. Even though this is not allowed at the continuous level, it is not a problem at the discrete level since we are using $\HH$-conforming basis functions.}
\end{remark}

\revision{The discrete treatment of the variational formulations~\eqref{testADIv2} 
will be completely analogous, taking care of the direction in which we enrich the test space.}	
For~\eqref{testADIv2} we get the system:
\begin{equation}
\begin{bmatrix}
\mathbf{G}_y&-\mathbf{B}_y\\
\mathbf{B}_y^{T}&0\\
\end{bmatrix}
\begin{bmatrix}
\mathbf{r}\\\mathbf{v}^{n+1}
\end{bmatrix}
=
\begin{bmatrix}
-\mathbf{L}_{y}\\0 
\end{bmatrix}.
\end{equation}
The $\mathbf{G}_y$ matrix corresponds to the Gram matrix associated with the inner product in $\mathbf{V}_{\boldsymbol j,0}$, which only consider derivatives in the enriched $y$-direction, i.e., 
$$
\mathbf{G}_y=M_{kk}^{x}\otimes
\begin{bmatrix}M_{mm}^{y}+K_{mm}^{y} &0\\0&M_{mm}^{y}+K_{mm}^{y}\end{bmatrix}.
$$
The $\mathbf{B}_y$ matrix corresponds to the left hand side of~\eqref{testADIv2}, which mixes trial and test basis functions, i.e.,  
$$
\mathbf{B}_y = M^{x}_{kk}\otimes
\begin{bmatrix}M^{y}_{mk}+\frac{\tau}{2}K^{y}_{mk} &0\\0&M^{y}_{mk}+\frac{\tau}{2}K^{y}_{mk}\end{bmatrix}.
$$
The vector $\mathbf{L}_y$ is given by:
$$
\begin{array}{rl}
\mathbf{L}_y =  
&\begin{bmatrix}(M^{x}_{kk}-\frac{\tau}{2}K^{x}_{kk})\otimes M^{y}_{mk}&0\\0&(M^{x}_{kk}-\frac{\tau}{2}K^{x}_{kk})\otimes M^{y}_{mk}\end{bmatrix}\begin{bmatrix}v_{1}^{n+\frac{1}{2}}\\v_{2}^{n+\frac{1}{2}}\end{bmatrix}\\\\
&\displaystyle-\frac{\tau}{2}\begin{bmatrix}A^{x}_{kk}\otimes M^{y}_{mk}&0\\0&M^{x}_{kk}\otimes A^{y}_{mk}\end{bmatrix}\begin{bmatrix}\tilde{p}^{n+\frac{1}{2}}\\\tilde{p}^{n+\frac{1}{2}}\end{bmatrix}+\frac{\tau}{2}\begin{bmatrix}F_{1}^{n+\frac{1}{2}}\\F_{2}^{n+\frac{1}{2}}\end{bmatrix}.
\end{array}
$$
\revision{The formulations~\eqref{testADIpsi} and ~\eqref{testADIphi} are stable and they do not require the iGRM stabilization.}

\section{Numerical results}
In this section, we provide several numerical results for non-stationary Stokes and Navier-Stokes problems.
\revision{First, in Section 4.1, we show that the Navier-Stokes simulation using the alternating directions implicit solver without the RM stabilization produces some oscillations for high Reynolds number.
In our paper, we use the splitting scheme from Jean-Luc Guermond and Petar Minev \cite{Minev}, which translates the Navier-Stokes problem into the velocity updates involving reaction-dominated diffusion kind of equation, followed by the explicit pressure updates.
This kind of setup is unstable for high Reynolds number in a similar manner how the reaction-dominated diffusion equations are unstable for a very small diffusion coefficient. Thus, in Section 4.1, we show how the stabilization for high Reynolds number of the Navier-Stokes formulation is performed using the residual minimization method.}
\revision{Second, in Section 4.2, we measure the error during the time-dependent simulation and test the order of the time integration scheme.}
\revision{Third, in Section 4.3, for Navier-Stokes simulations with manufactured solution, we compute $L^2$ and $H^1$ errors of the velocity and $L^2$ error of the pressure. We perform numerical experiments with different trial and test spaces, tracking their influence on velocity and pressure errors.}
\revision{Finally, in Section 4.4, we test the CFL condition for the Navier-Stokes problem with manufactured solutions, using a high Reynolds number, with the residual minimization procedure and different time-step sizes.}

\revision{The comprehensive comparison of the convergence properties, the algebraic structure, and the spectral properties of the IGA and spectral element methods for mass, stiffness, and advection-diffusion matrices is presented in \cite{r3}. The authors plot $h$ and $p$ convergence as measured in the $L_2$ and $H_1$ norms, for $p=1,...,24$ or $p=1,...,8$ depending on the formulation. They also plot the matrix sparsity patterns and analyze the spectral properties of the matrices. When comparing our paper with \cite{r3}, we have noticed that in our paper, we test the $h$ convergence in $L_2$ and $H_1$ norms for different trial/test spaces for the non-stationary Navier-Stokes problem. Such the non-stationary experiments are not considered in \cite{r3}. With respect to the $p$-convergence, we consider different trial/test spaces to fit into the residual minimization setup.}

\subsection{\revision{Cavity flow problem for Navier-Stokes equations}}

\revision{We consider the non-stationary cavity flow problem over a square domain $ (0,1)^2$
\begin{equation}
\left\{
\begin{aligned}
\partial_t \V  +\inblue{(\V \cdot \grad) \V}-\frac{1}{R_e}\Delta\V + \nabla p &= 0, \\
  \Div \V      &= 0,       
\end{aligned} 
\right. \nonumber 
\end{equation}
together with vanishing boundary conditions for $\V=(v_1,v_2)$ except on the top boundary where we make $v_1(x,1)=1$.}

\revision{First, we focus on the Galerkin formulation. We select trial and test $U_h = V_h = \left(\Spl^{3,2}_0\right)^2 \times \SplO^{3,2}$ spaces. 
We employ the alternating direction implicit solver without the residual minimization. 
We run our simulations on an $80\times 80$ mesh. 
We select $R_e=1000$. We choose the time step $\tau=10^{-2}$.}


\begin{figure}[htp]
\begin{center}
\includegraphics[scale=0.25]{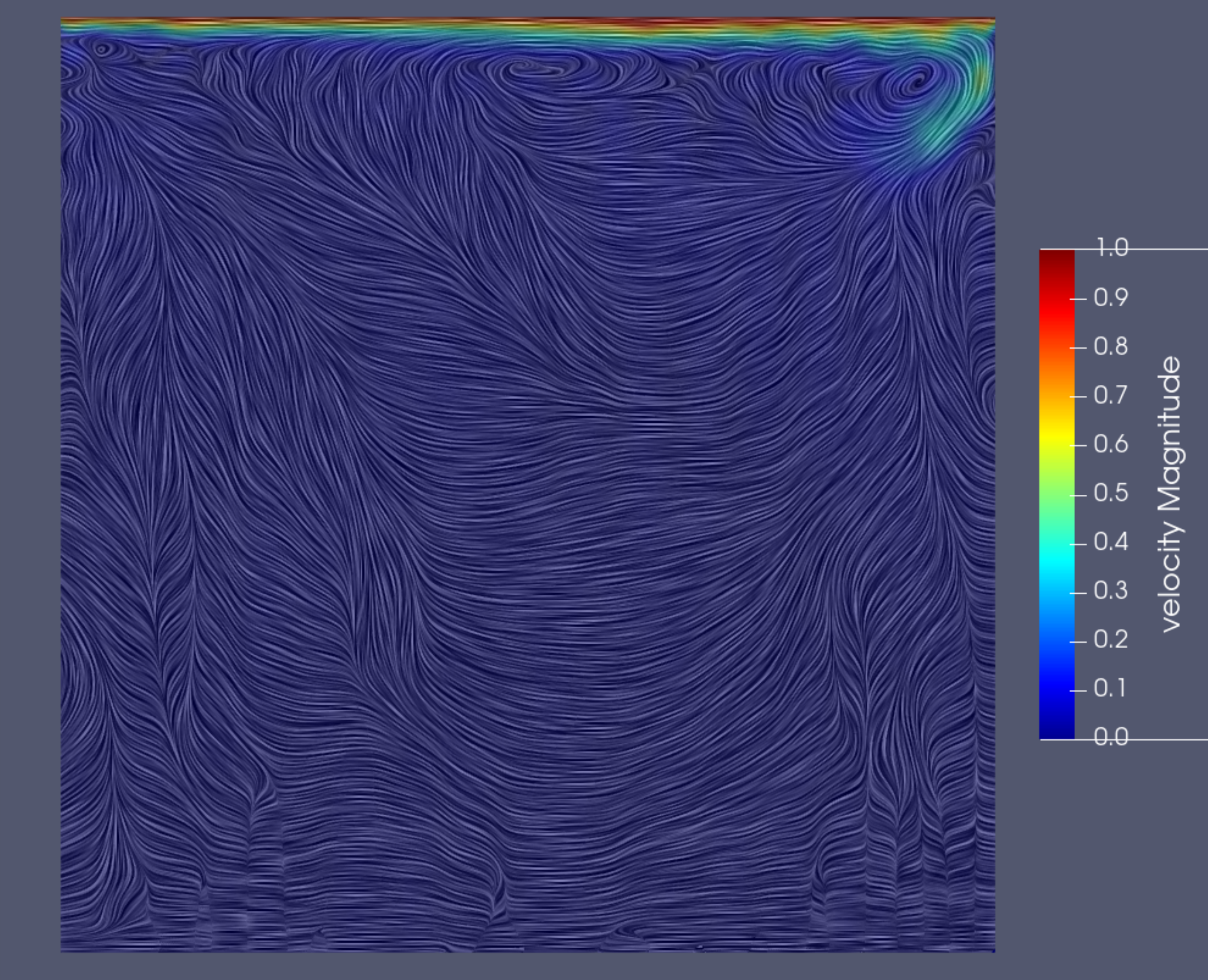} 
\includegraphics[scale=0.25]{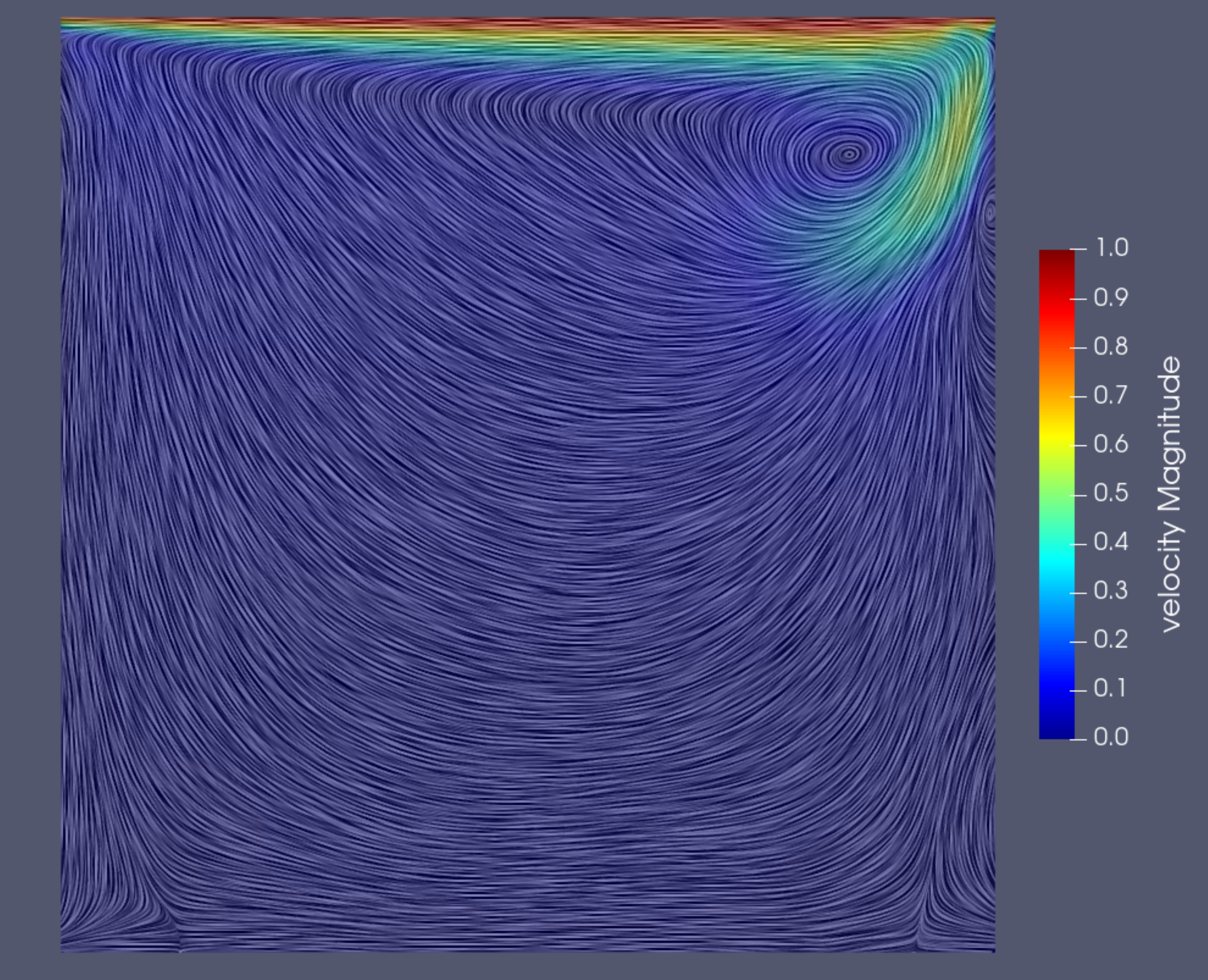} \\
\includegraphics[scale=0.25]{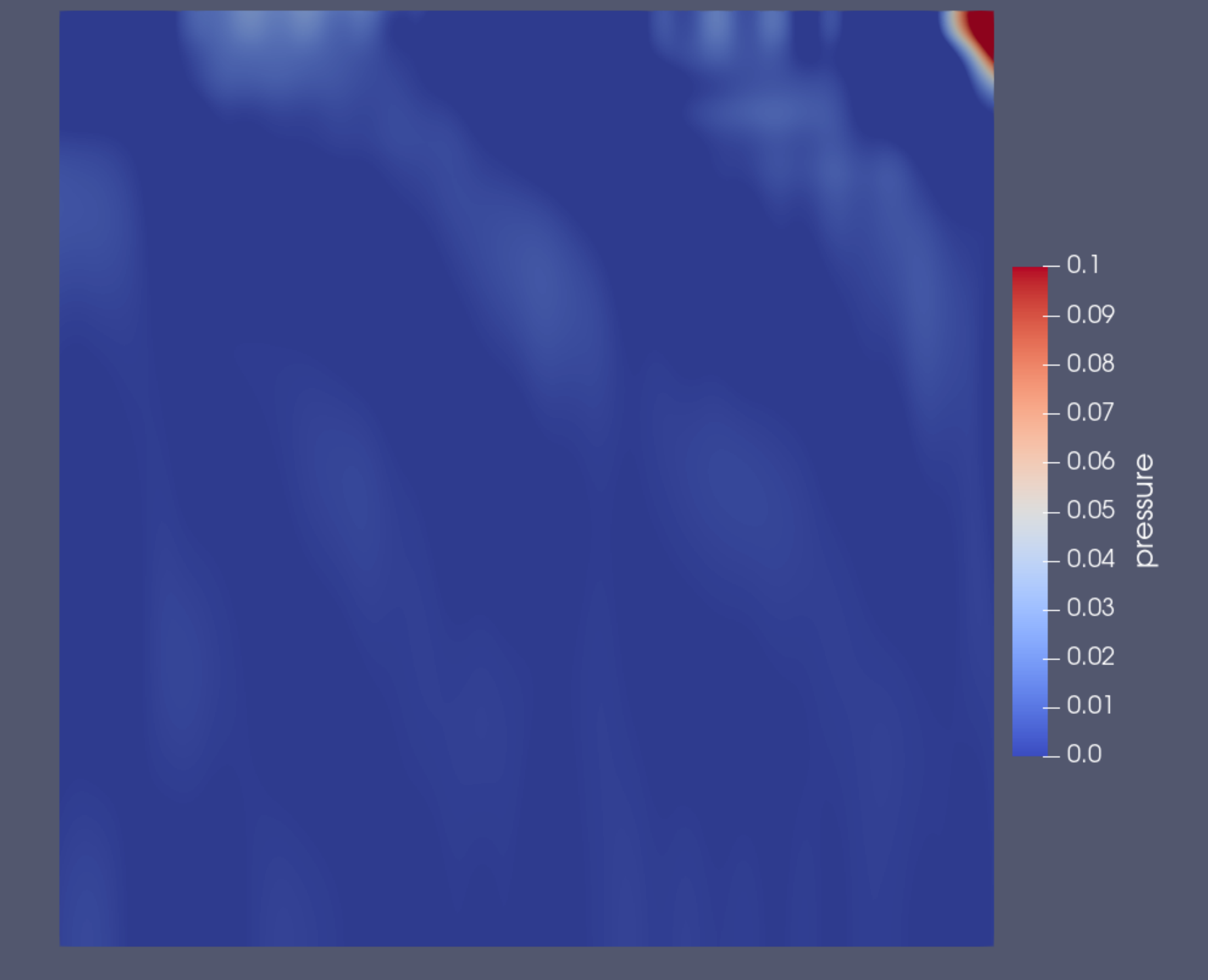} 
\includegraphics[scale=0.25]{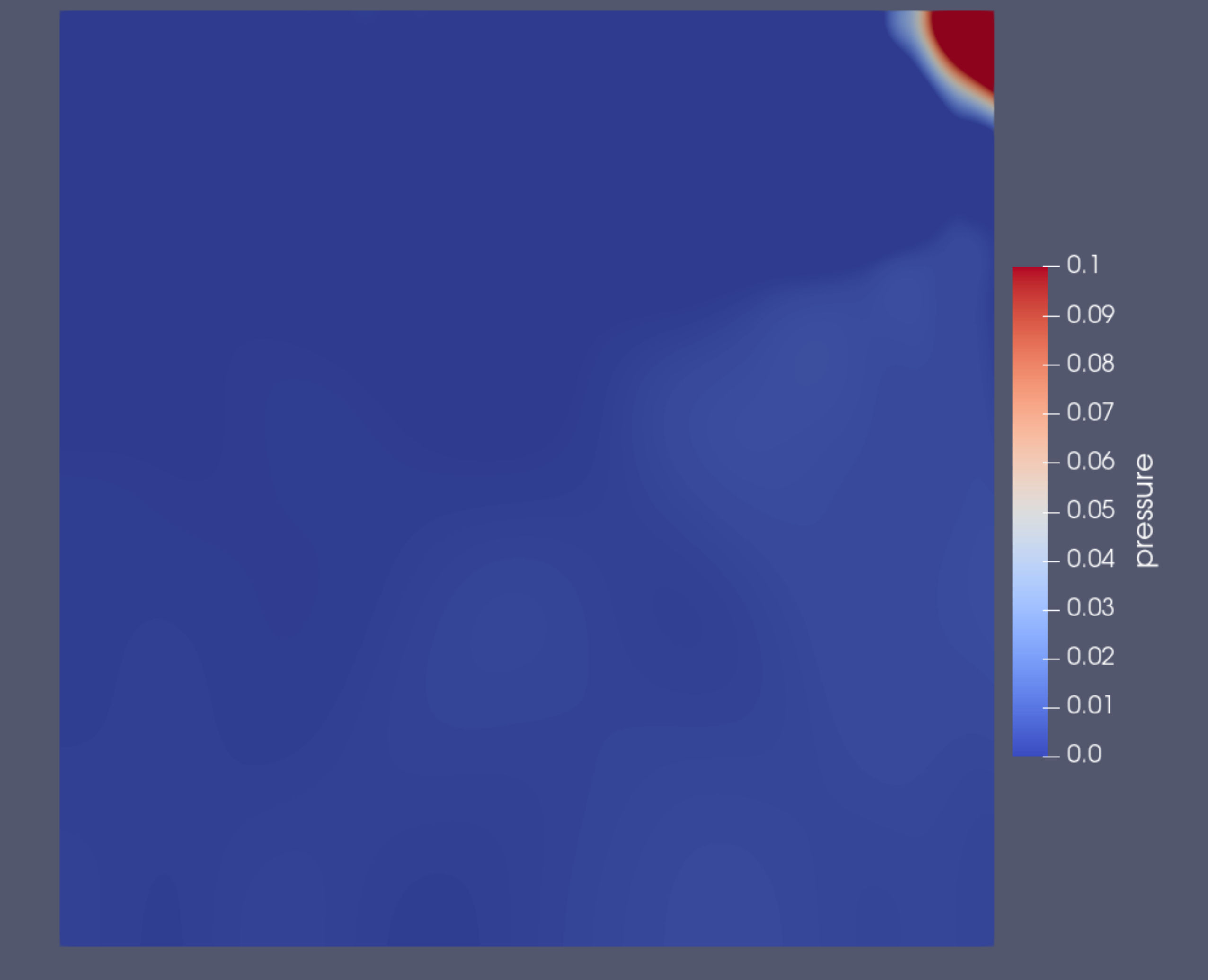} \\
\caption{\revision{Cavity flow problem for Navier-Stokes equations for $R_e=1000$. Final velocity vector field and pressure scalar field. {\bf Left column:} Alternating direction implicit solver without residual minimization for trial and test $U_h = V_h = \left(\Spl^{3,2}_0\right)^2 \times \SplO^{3,2}$ space, with a $80\times 80$ mesh. {\bf Right column:}
Alternating direction implicit solver with residual minimization for trial $U_h = \left(\Spl^{3,2}_0\right)^2 \times \SplO^{3,2}$ and test $V_h = \left(\Spl^{4,2}_0\right)^2 \times \SplO^{4,2}$ spaces, with a $80\times 80$ mesh.}}
\label{fig:unstable}
\end{center}
\end{figure}

\revision{Second, we focus on the isogeometric residual minimization method.
We select trial $U_h = \left(\Spl^{3,2}_0\right)^2 \times \SplO^{3,2}$ and test $V_h = \left(\Spl^{4,2}_0\right)^2 \times \SplO^{4,2}$ spaces.
We employ the alternating direction implicit solver, this time with the residual minimization.
We run our simulations on an $80\times 80$ mesh. We choose the time step $\tau=10^{-2}$.
We select $R_e=1000$.}

\revision{The comparison of the final configurations of the velocity and pressure are presented in Figure \ref{fig:unstable}. We can see that simulations without the residual minimization lead to some unexpected oscillations. For $R_e=1000$, where the Galerkin simulation was unstable, the residual minimization method stabilizes the problem; see Figure \ref{fig:unstable}. When running both simulations for $R_e=100$, the residual minimization method provides similar results as the Galerkin method, as presented in Figure \ref{fig:NS}. }

\revision{We also illustrate in Figures \ref{fig:NSvelL2}-\ref{fig:NSvelH1} the evolution of the $L^2$ and $H^1$ norms for the velocity . Additionally, in Figure\ref{fig:NSpresL2}, we show the $L^2$ norm of the pressure. The plots correspond to the entire simulation performed for $R_e=1000$ with the residual minimization method.}
\begin{figure}[htp]
\begin{center}
\includegraphics[scale=0.27]{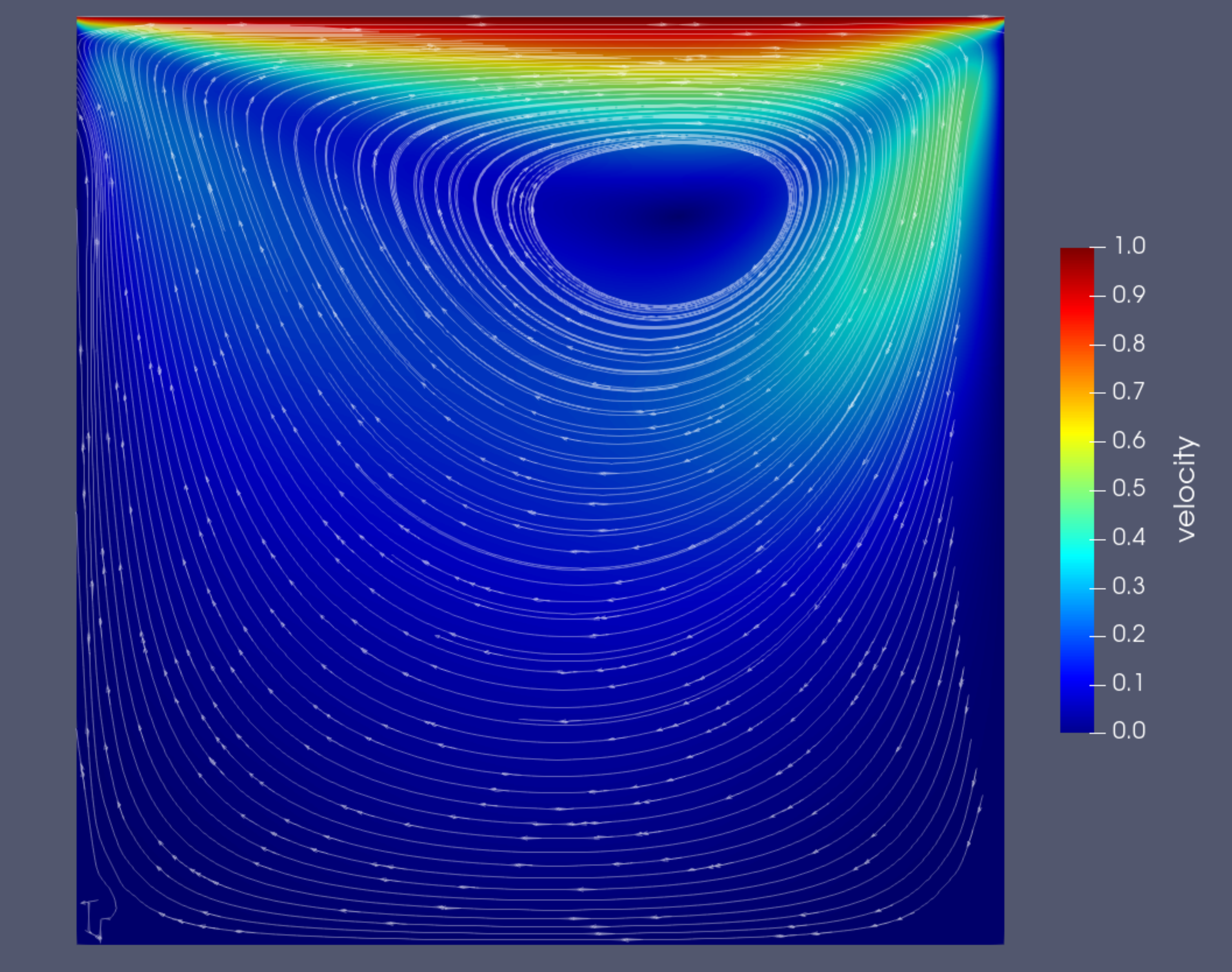} 
\includegraphics[scale=0.27]{Re-1e2-40x40.pdf} 
\caption{\revision{Cavity flow problem for Navier-Stokes equations for $R_e=100$. {\bf Left panel:} Galerkin with with trial $U_h = \left(\Spl^{3,2}_0\right)^2 \times \SplO^{3,2}$ space, with a $40\times 40$ mesh. {\bf Right panel:} Residual minimization method with trial $U_h = \left(\Spl^{3,2}_0\right)^2 \times \SplO^{3,2}$ and test $V_h = \left(\Spl^{4,2}_0\right)^2 \times \SplO^{4,2}$ spaces, with a $40\times 40$ mesh.}} 
\label{fig:NS}
\end{center}
\end{figure}
\begin{figure}[htp]
\begin{center}
\includegraphics[scale=1.0]{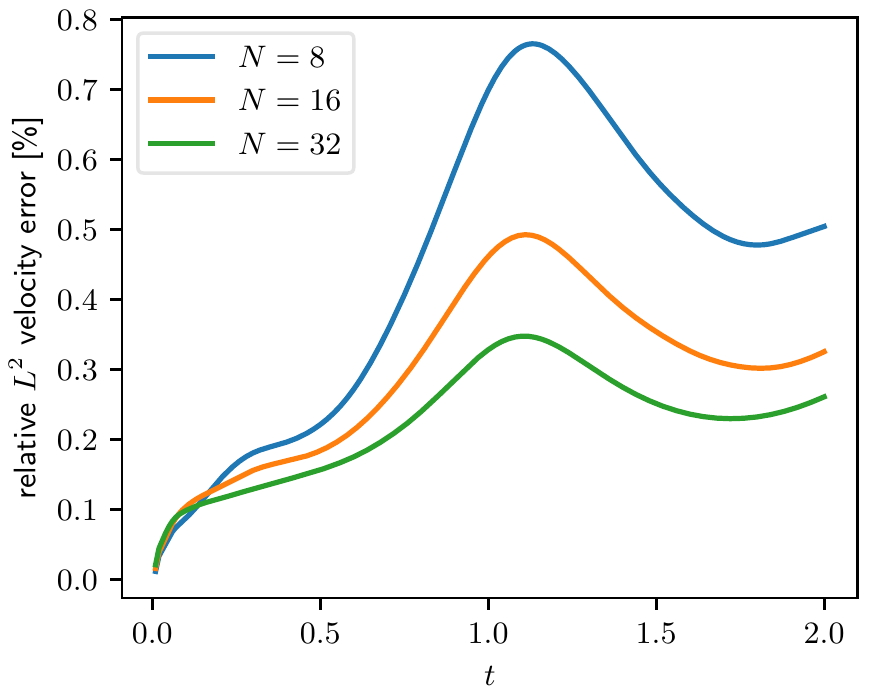} 
\caption{\revision{Cavity flow problem for Navier-Stokes equations  for $Re=1000$. $L^2$ norm of the velocity for the simulation with iGRM method with trial $U_h = \left(\Spl^{3,2}_0\right)^2 \times \SplO^{3,2}$ and test $V_h = \left(\Spl^{4,2}_0\right)^2 \times \SplO^{4,2}$ spaces, for grids with different dimensions, for $Re=1000$.}} 
\label{fig:NSvelL2}
\end{center}
\end{figure}
\begin{figure}[htp]
\begin{center}
\includegraphics[scale=1.0]{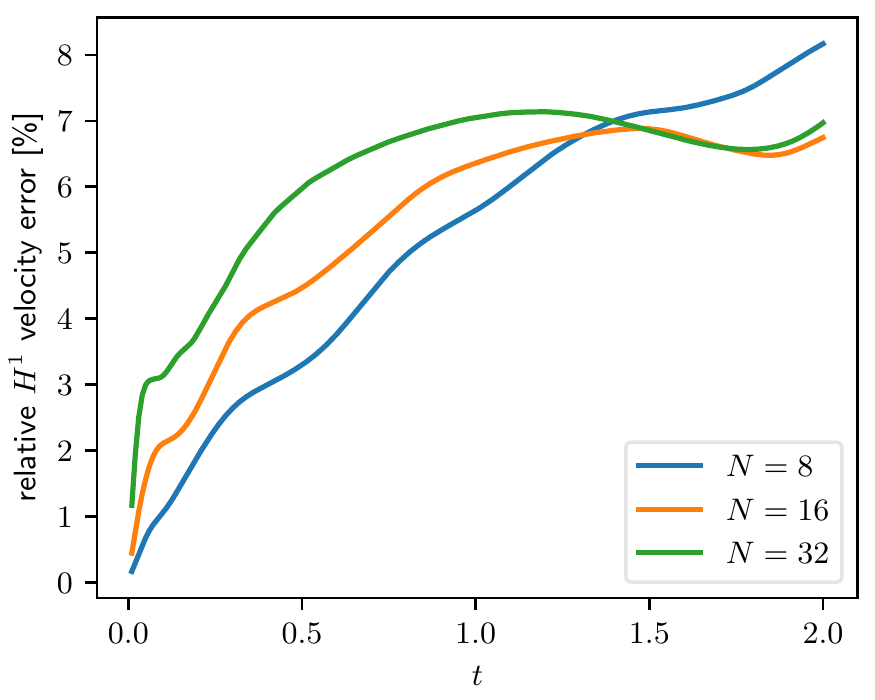} 
\caption{\revision{Cavity flow problem for Navier-Stokes equations  for $Re=1000$. $H^1$ norm of the velocity for the simulation with iGRM method with $U_h = \left(\Spl^{3,2}_0\right)^2 \times \SplO^{3,2}$ and test $V_h = \left(\Spl^{4,2}_0\right)^2 \times \SplO^{4,2}$ spaces, for grids with different dimensions, for $Re=1000$.}}
\label{fig:NSvelH1}
\end{center}
\end{figure}
\begin{figure}[htp]
\begin{center}
\includegraphics[scale=1.0]{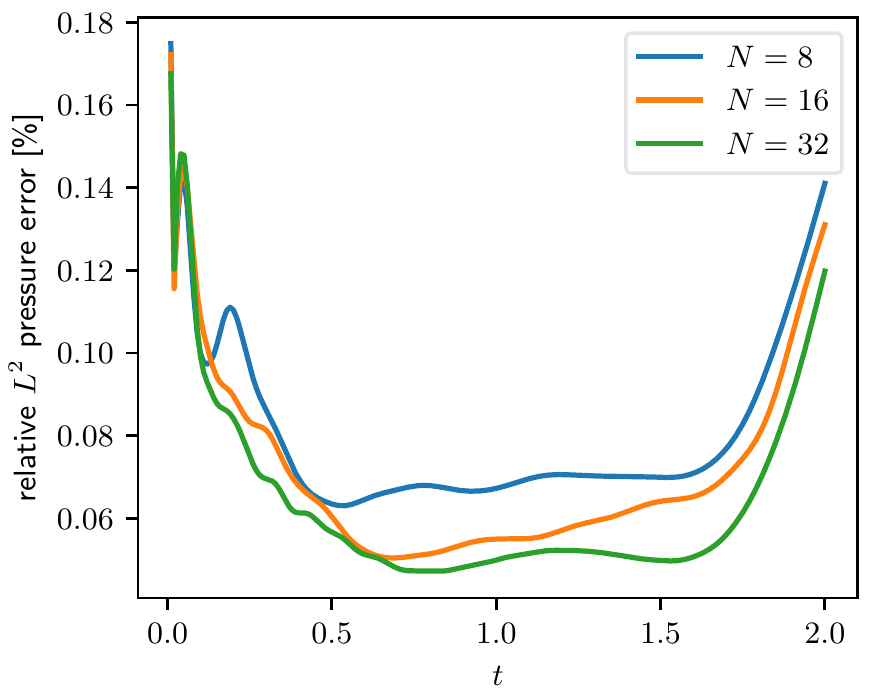} 
\caption{\revision{Cavity flow problem for Navier-Stokes equations for $Re=1000$. $L^2$ norm of the pressure for the simulation with iGRM method with trial $U_h = \left(\Spl^{3,2}_0\right)^2 \times \SplO^{3,2}$ and test $V_h = \left(\Spl^{4,2}_0\right)^2 \times \SplO^{4,2}$ spaces, for grids with different dimensions.}} 
\label{fig:NSpresL2}
\end{center}
\end{figure}

\subsection{The non-stationary Stokes problem with manufactured solution}
We consider the non-stationary Stokes problem in the square~$\Omega = (0, 1)^2$ and time interval $I=(0,2]$, with no-slip boundary conditions, i.e., %
$$
\left\{
\begin{array}{r}
\text{Find~$(\V,p)$ such that}\\
\partial_t \V  -\Delta\V + \nabla p = \F, \\
  \Div \V      = 0,\\
  \left.\V\right|_{\partial \Omega} = 0,\\
  \mathbf{v}(0)=\mathbf{v}_{0},\\
\end{array}
\right.
$$
with $\F$ and $\mathbf{v}_{0}$ defined in such a way that the manufactured solution is $\mathbf{v}(x,y,t) = (\sin(x)\sin(y+t), \cos(x) \cos(y+t))$ and $p(x,y) = \cos(x) \sin(y+t)$. 

\revision{{We select $U_h = \left(\Spl^{3,2}_0\right)^2 \times \SplO^{3,2}$ trial spaces and 
$V_h = \left(\Spl^{4,2}_0\right)^2 \times \SplO^{4,2}$ test spaces}, over a mesh of $40\times 40$ elements. We employ the linear  {${\cal O}{(N)}$}  computational cost solver using the Kronecker product structure at each sub-step of the time iteration algorithm.}
\revision{We measure the influence of different time step sizes into the numerical error of our problem.} The $L^2$-error of the solutions at final time $t=2$ is reported in Figure \ref{fig:L2H1rel}.

\begin{figure}[htp]
\begin{center}
\includegraphics[scale=0.4]{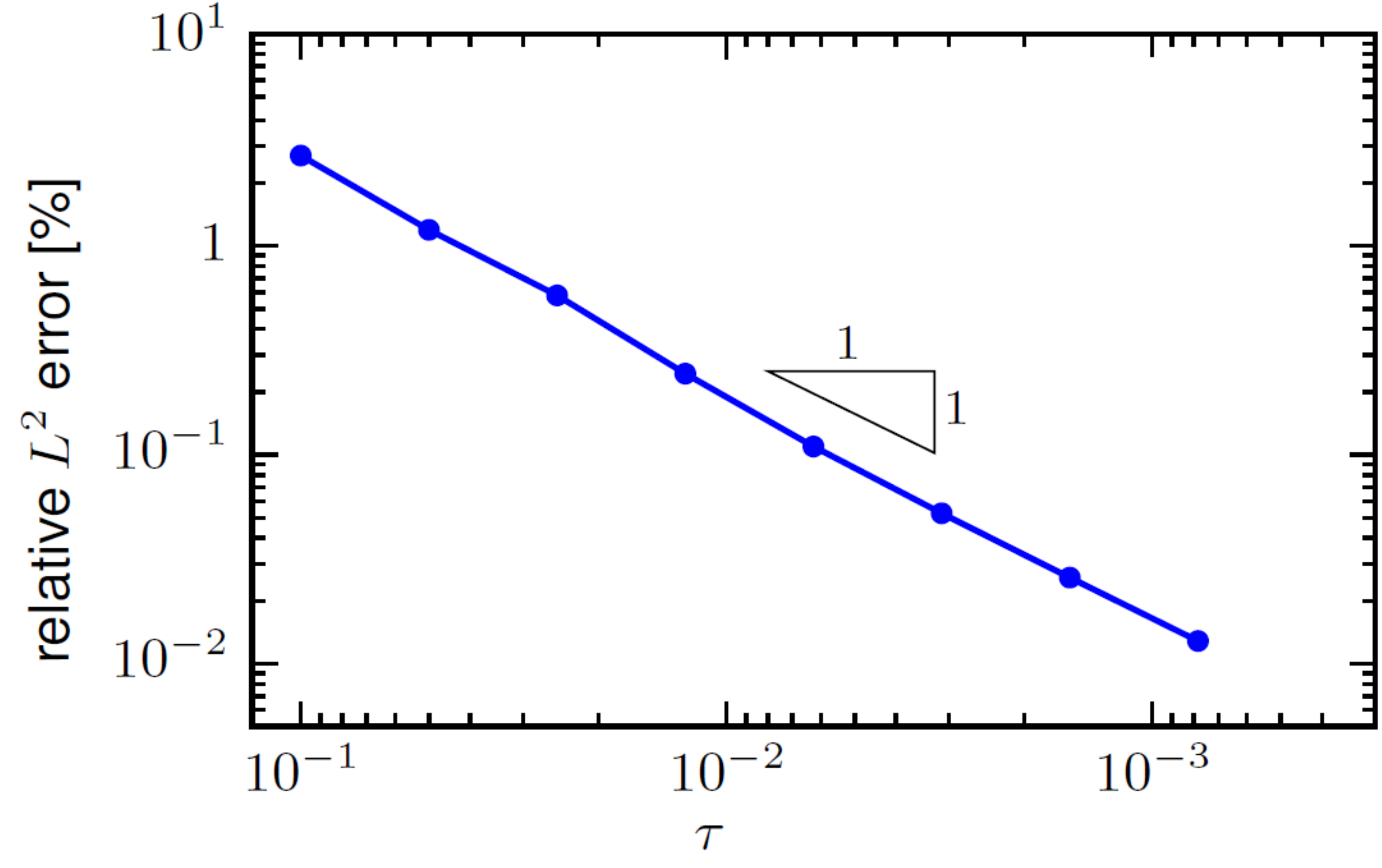} 
\caption{\revision{The non-stationary Stokes problem with manufactured solution. $L^2$ norm of the solution at final time $t=2$ for iGRM method with}
$U_h = \left(\Spl^{3,2}_0\right)^2 \times \SplO^{3,2}$ trial space and 
$V_h = \left(\Spl^{4,2}_0\right)^2 \times \SplO^{4,2}$ test space, for different time step sizes $\tau$ and a fixed mesh of $40\times 40$ elements.}
\label{fig:L2H1rel}
\end{center}
\end{figure}

\inblue{As described in \cite[Proposition 4.1]{Minev}, 
the perturbed scheme~\eqref{perturbed} will be first-order in time if the $\chi$ parameter is equal to 0. Selecting $\chi>0$ and $\phi^{-\frac{1}{2}}=\frac{\tau}{2}\partial_t p(0)$ may increase the order of the time-stepping scheme (cf.~\cite{GueMinSal}). The proposition concerns the convergence of the perturbed problem to the unperturbed Navier-Stokes equation, when $\epsilon$ is set to be equal to the time step size. In our simulations, we select $\chi=0$ and we recover a first-order accurate time integration scheme. However, when we run the experiments using  $\chi>0$ and $\phi^{-\frac{1}{2}}=\frac{\tau}{2}\partial_t p(0)$, we still obtain a first-order dependence with respect to time. }

\subsection{A Navier-Stokes problem with manufactured solution}
\inblue{We consider the following Navier-Stokes equation over the square {$\Omega = (0,1)^2$} with no-slip boundary condition %
$$
\left\{
\begin{array}{r}
\text{Find~$\V=(v_1,v_2)$ and $p$ such that:}\\
\partial_t \V    -\displaystyle{\Delta\V\over R_e} +\inblue{(\V \cdot \grad) \V} + \nabla p   = \F, \\
  \Div \V        = 0,\\
  \left.\V\right|_{\partial \Omega}   = 0,\\
  \mathbf{v}(0)  =\mathbf{v}_{0},
\end{array}
\right.
$$
with $\F$ and $\mathbf{v}_{0}$ defined in such a way that the manufactured solution is $\mathbf{v}(x,y,t) = (\sin(x) \sin(y+t), \cos(x) \cos(y+t))$ and $p(x,y,t) = \cos(x) \sin(y+t)$. }

\revision{First, we focus on the Galerkin formulation without the residual minimization method. 
We use trial and test $U_h = V_h = \left(\Spl^{3,2}_0\right)^2 \times \SplO^{3,2}$ spaces, with a $40\times 40$ mesh, $Re=1000$. We select time step size $\tau=\frac{1}{512}$ and intend to perform 1024 time steps. The simulations generates some oscilations, which is illustrated in Figure \ref{fig:unstable2}.}

\begin{figure}[htp]
\begin{center}
\includegraphics[scale=0.21]{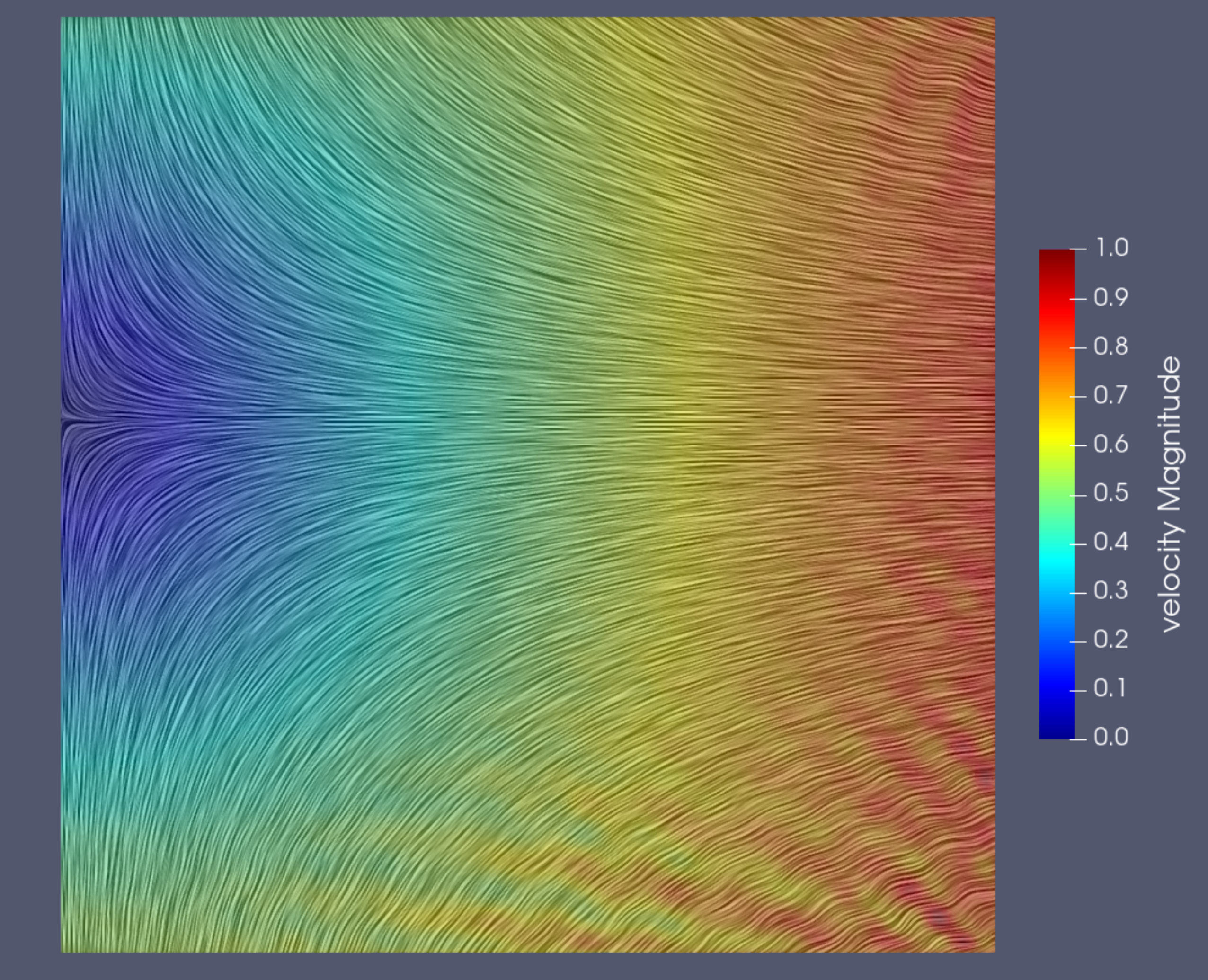} 
\includegraphics[scale=0.21]{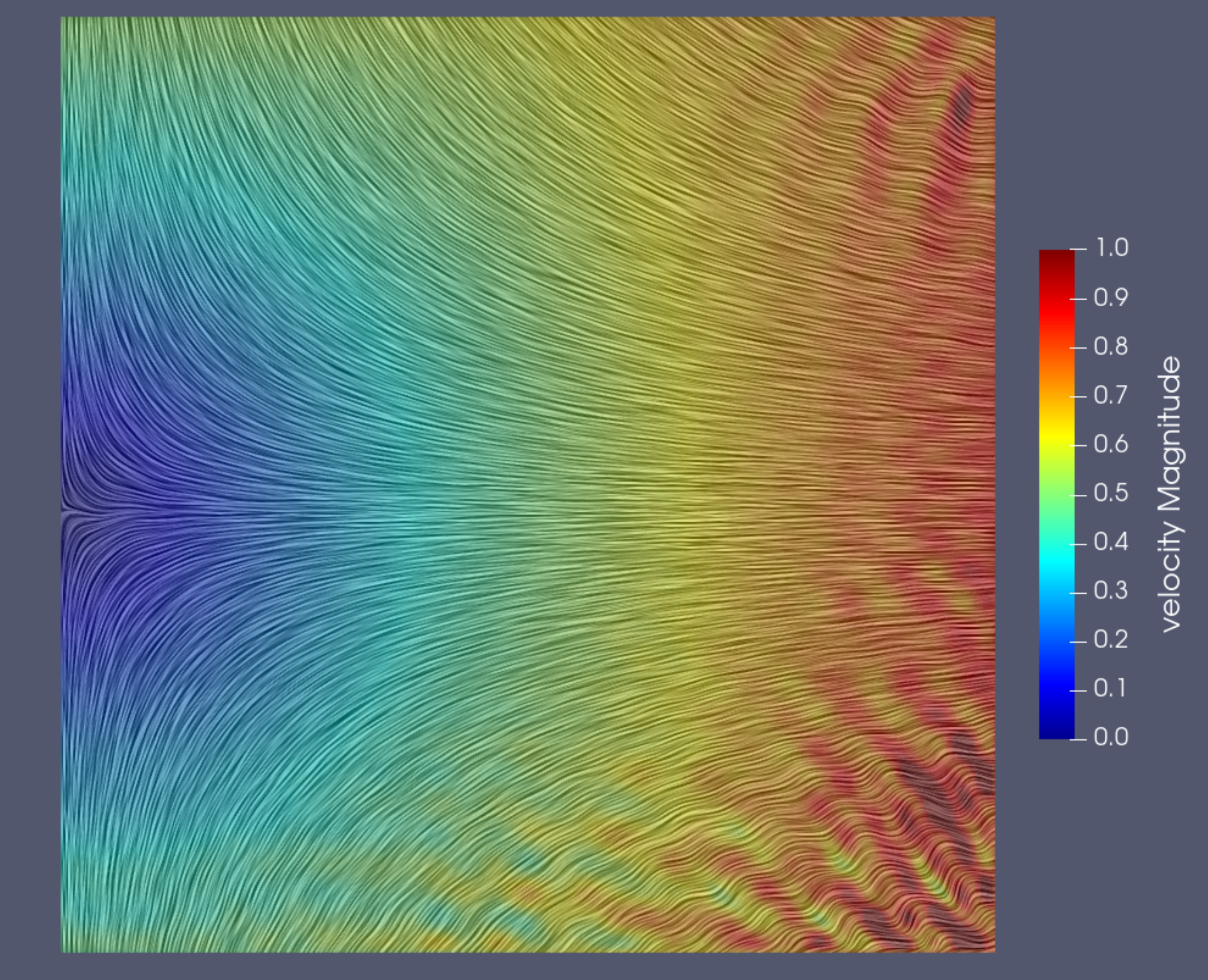} \\
\includegraphics[scale=0.21]{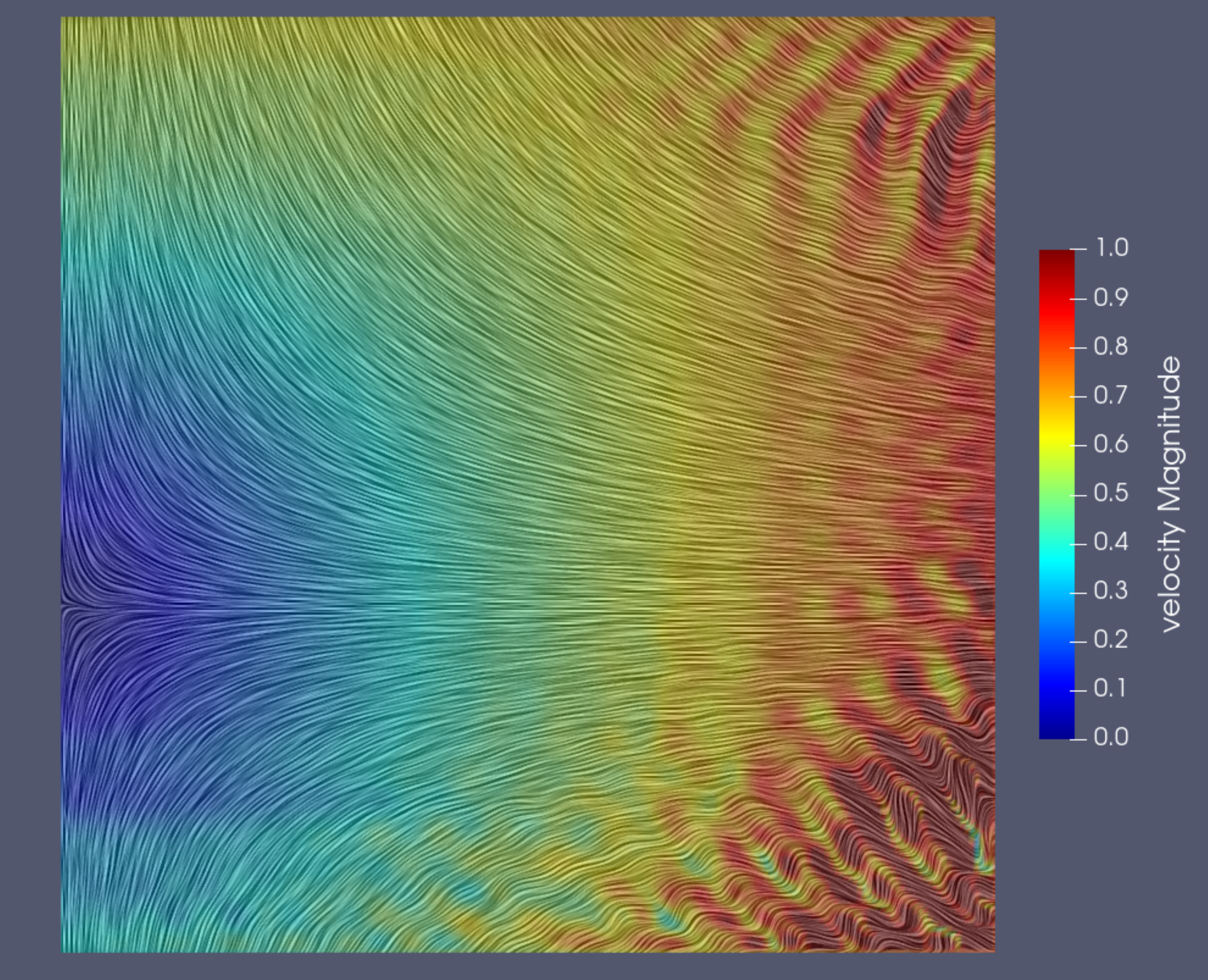} 
\includegraphics[scale=0.21]{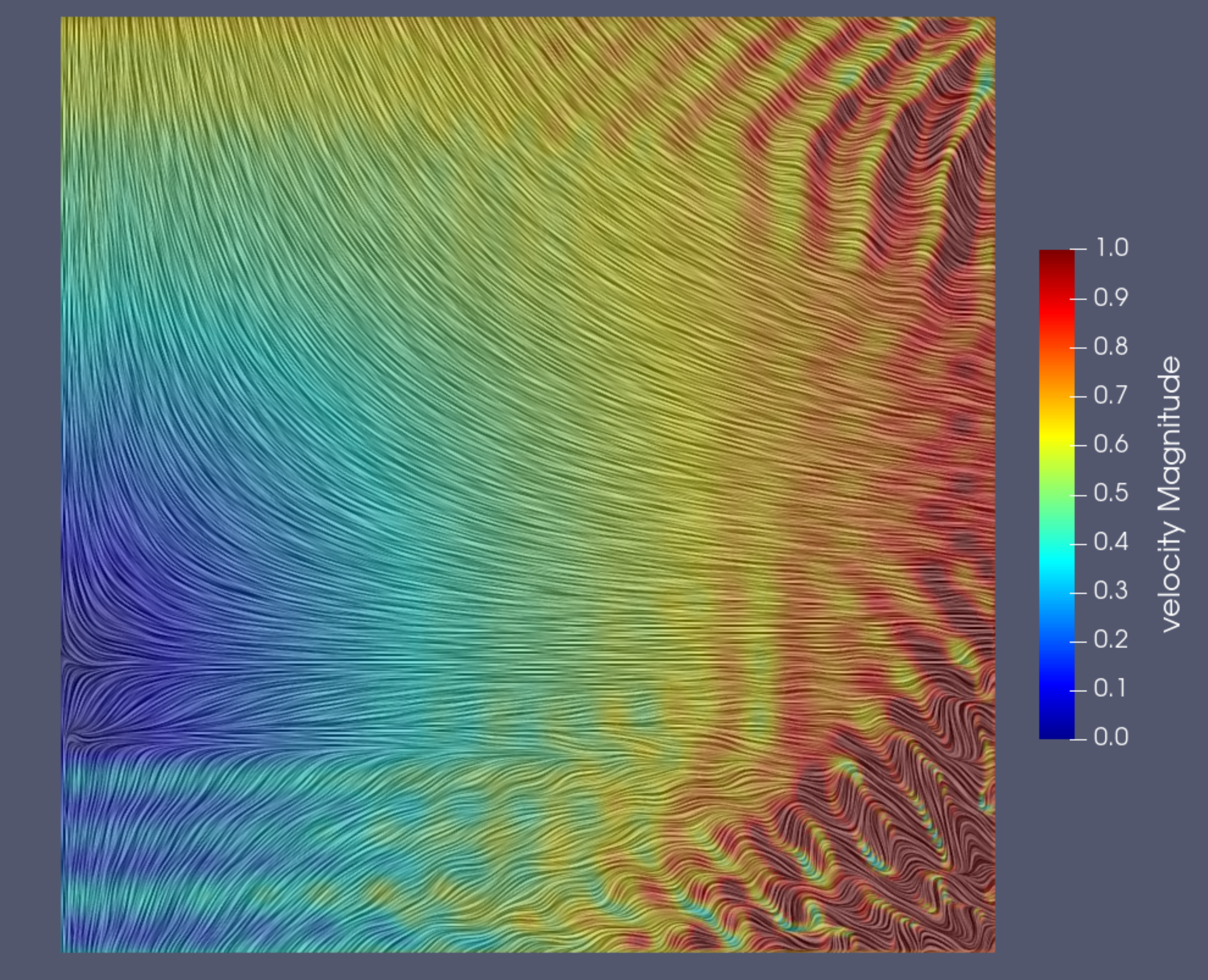} \\
\includegraphics[scale=0.21]{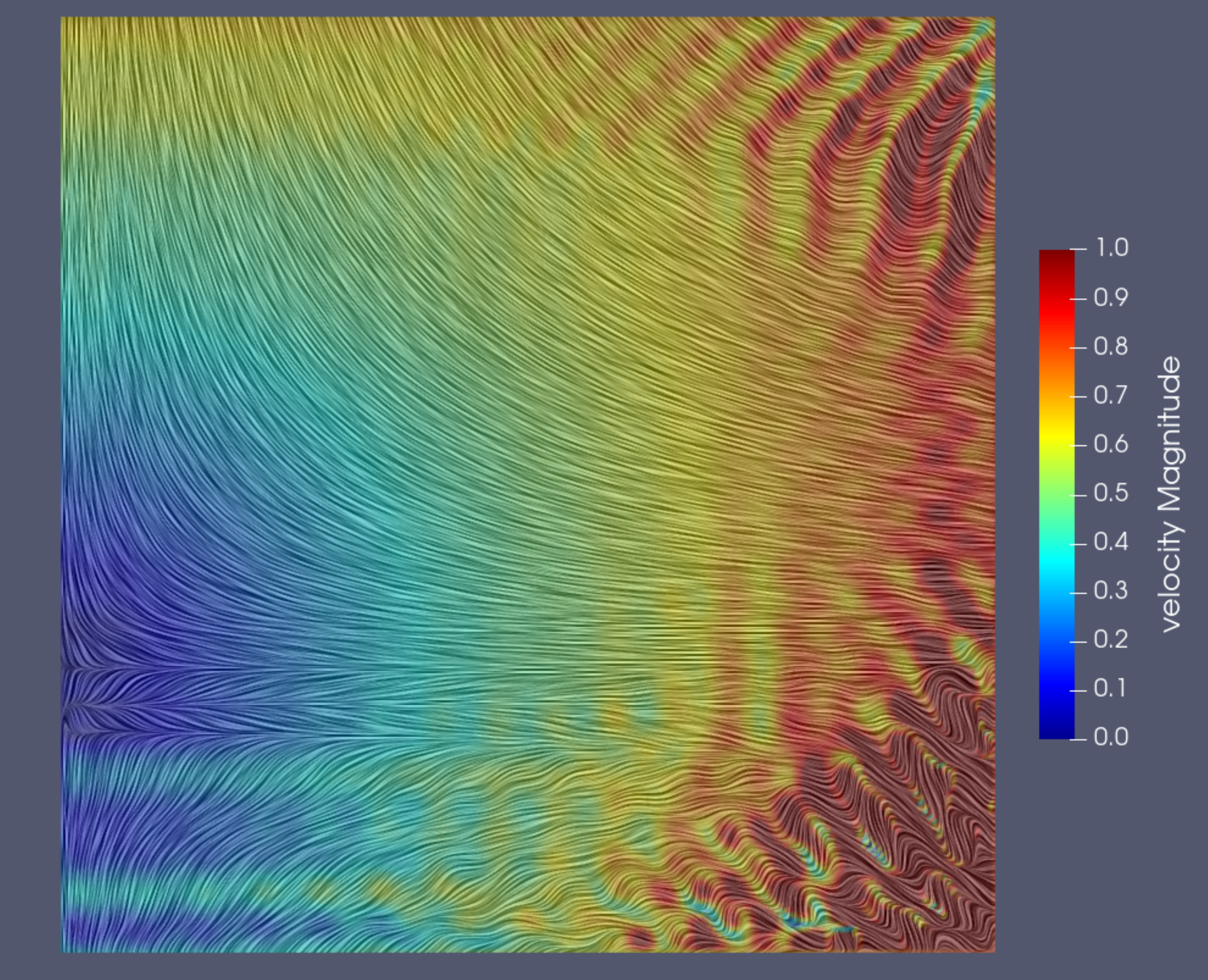} 
\includegraphics[scale=0.21]{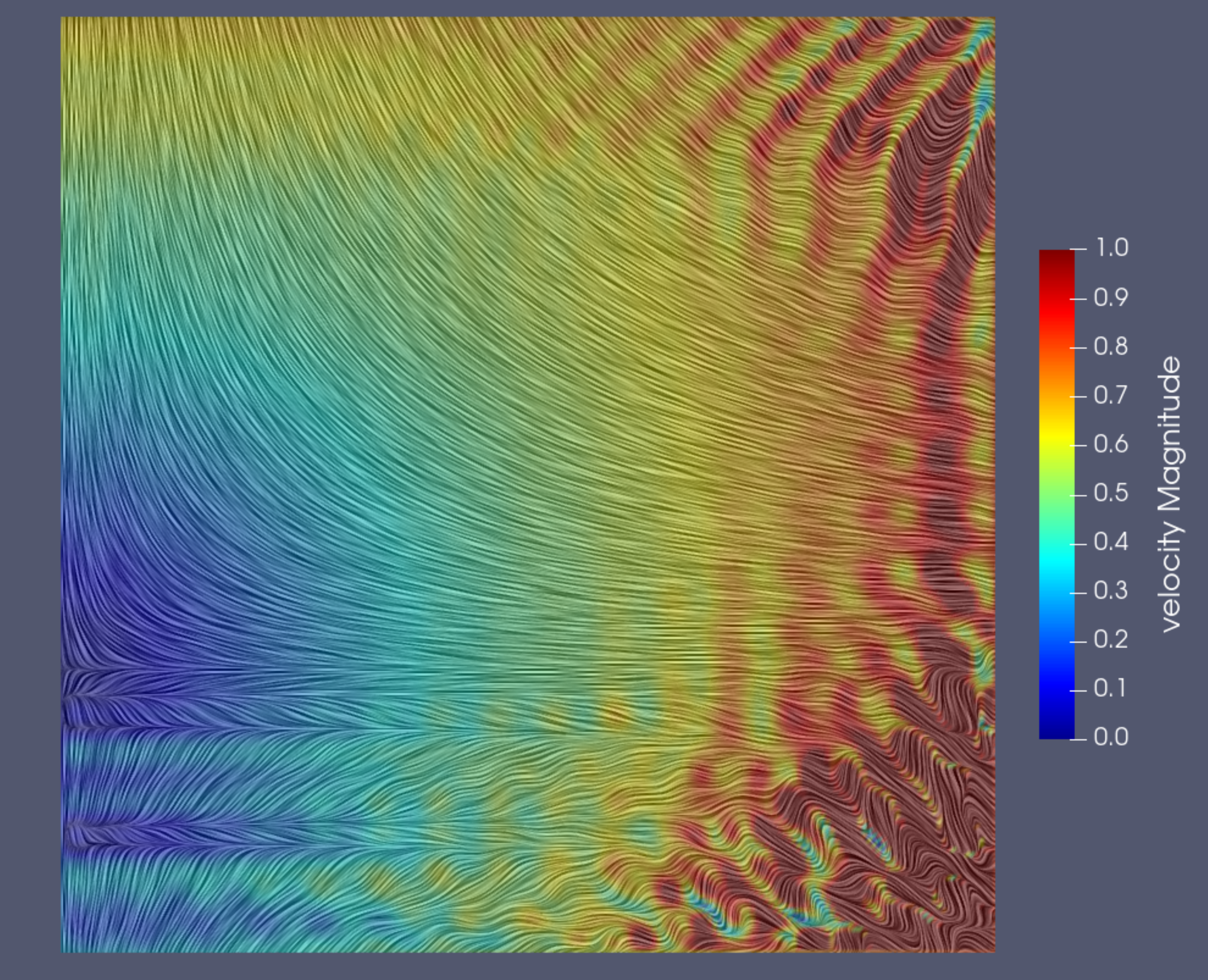} \\
\includegraphics[scale=0.21]{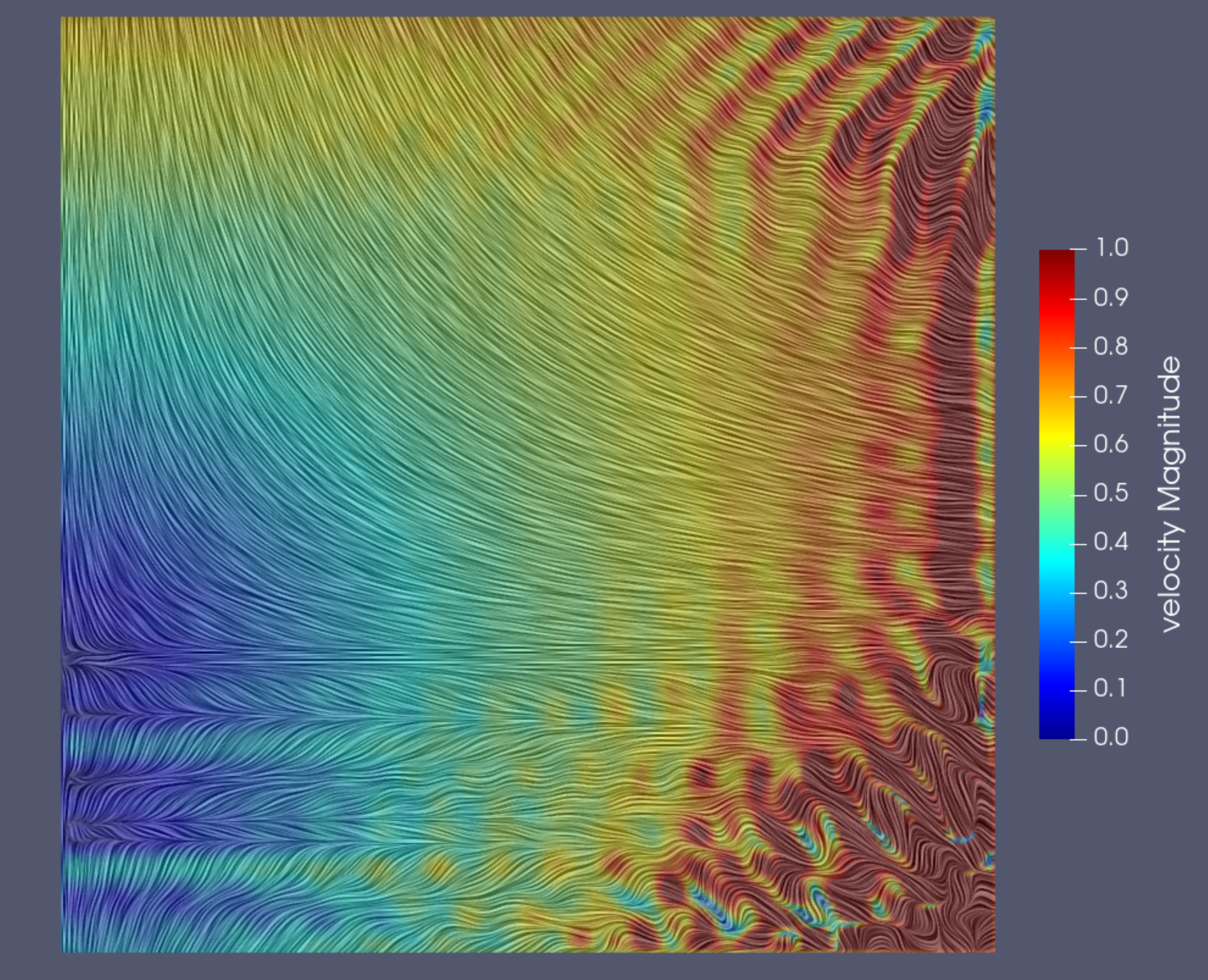} 
\includegraphics[scale=0.21]{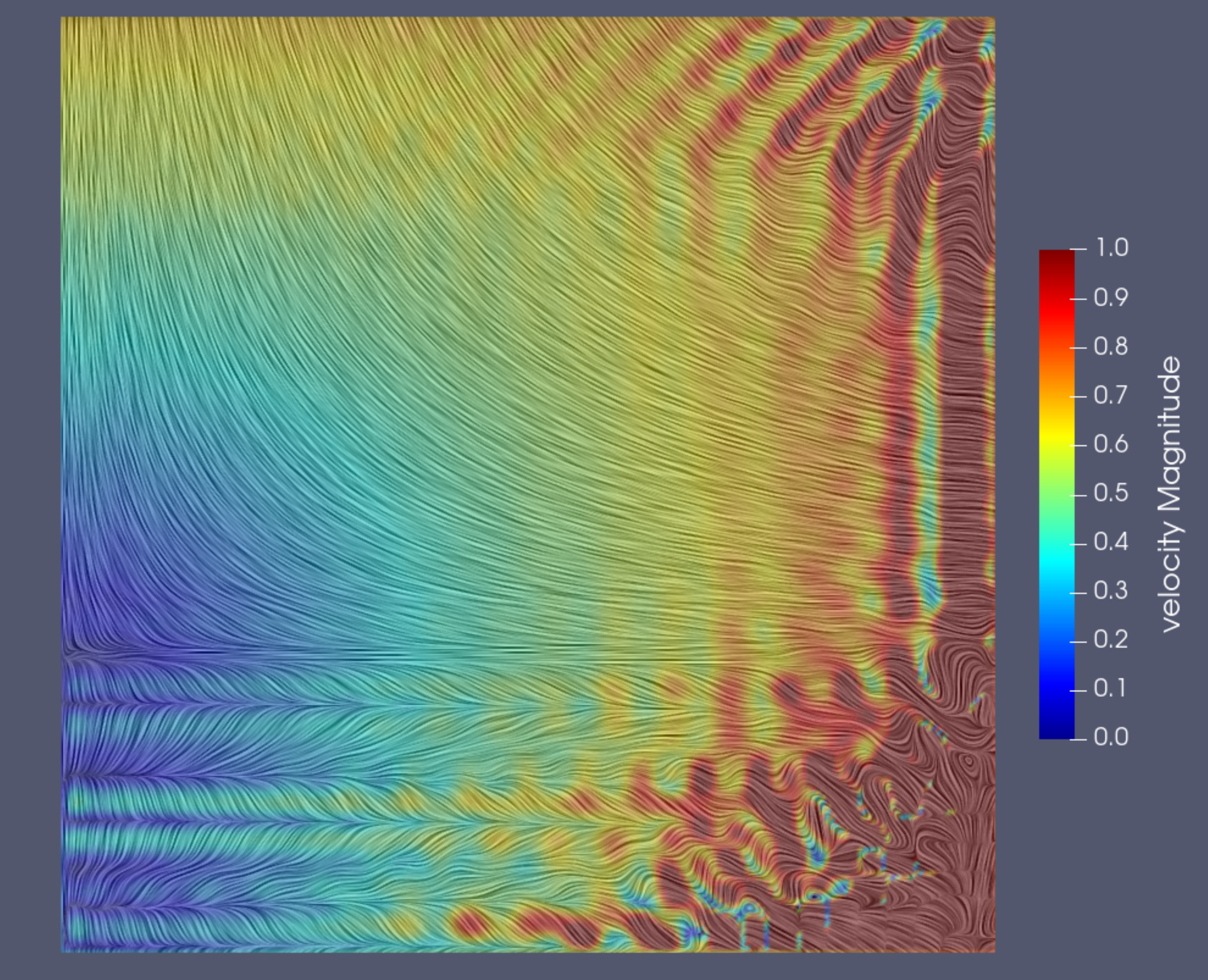} \\
\caption{\revision{A Navier-Stokes problem with manufactured solution with $Re=1000$. Alternating direction implicit solver without residual minimization for $U_h = V_h = \left(\Spl^{3,2}_0\right)^2 \times \SplO^{3,2}$ trial and test spaces, with a $40\times 40$ mesh. Snapshots on the velocity field in time moments 100, 110, 120, 125, 126, 127, 128, and 129.}} 
\label{fig:unstable2}
\end{center}
\end{figure}

\revision{Introducing the residual minimization stabilizes the simulation, and it allows us to measure the influence of different time step sizes into the numerical error for different trial and test spaces. The error measured in $L^2$ and $H^1$ norms for the velocity is reported in Figures \ref{fig:L2relvel}-\ref{fig:H1relvel}. } 
\begin{figure}[htp]
\begin{center}
\includegraphics[scale=0.4]{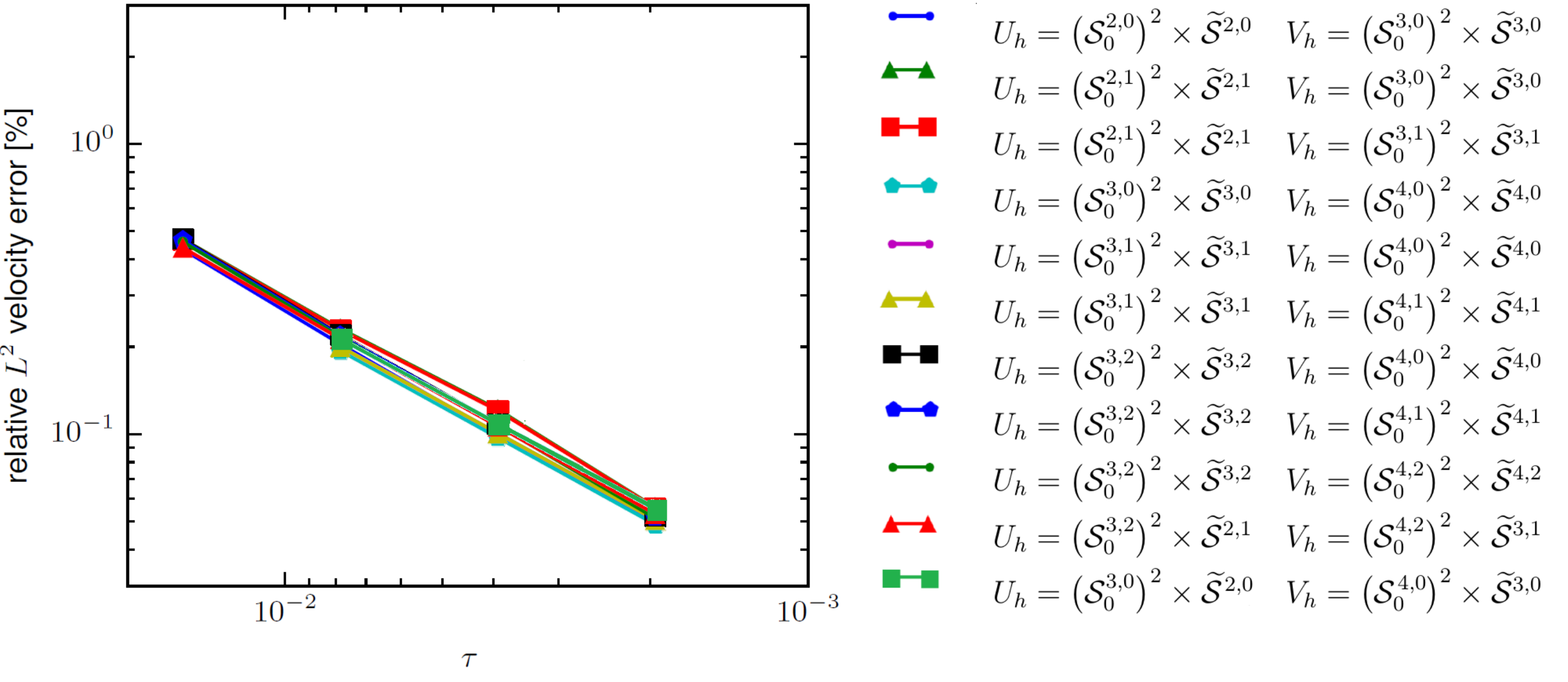} 
\caption{\revision{A Navier-Stokes problem with manufactured solution with $R_e=1000$. $L^2$ relative error of the velocity at final time $t=2$, for different trial and test spaces, for the iGRM method with a $20\times 20$ mesh.} }
\label{fig:L2relvel}
\end{center}
\end{figure}

\begin{figure}[htp]
\begin{center}
\includegraphics[scale=0.4]{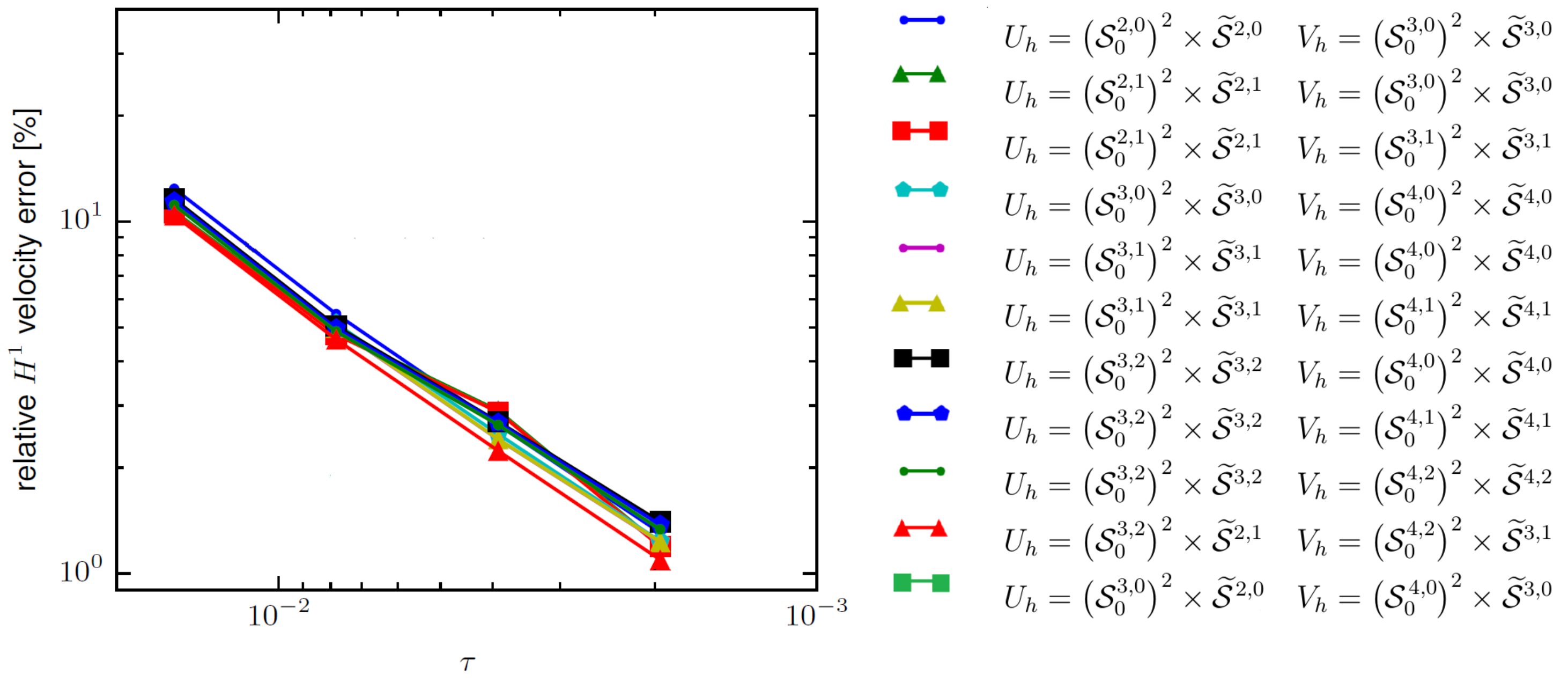} 
\caption{\revision{A Navier-Stokes problem with manufactured solution with $R_e=1000$. $H^1$ relative error of the velocity at final time $t=2$, for different trial and test spaces, for the iGRM method with a $20\times 20$ mesh.} }
\label{fig:H1relvel}
\end{center}
\end{figure}
\begin{figure}[htp]
\begin{center}
\includegraphics[scale=0.4]{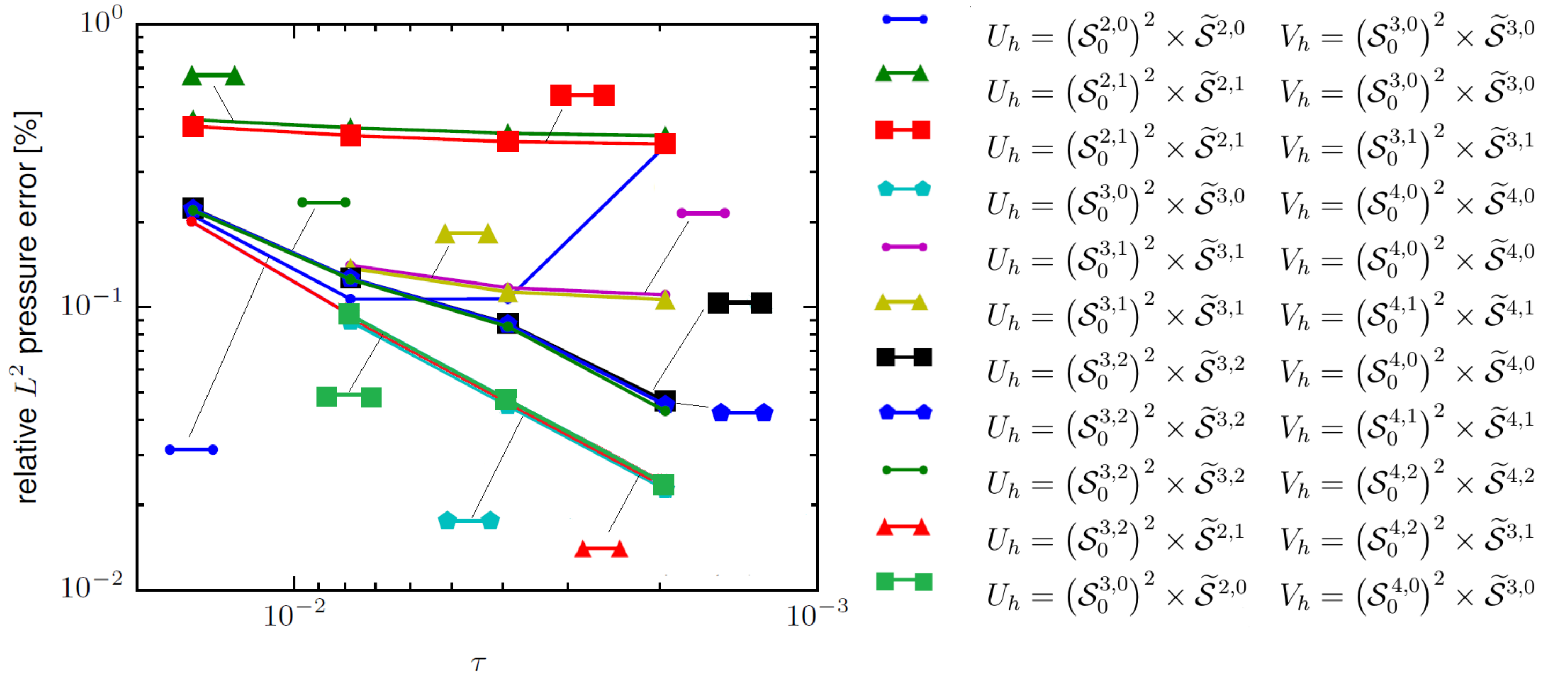} 
\caption{\revision{A Navier-Stokes problem with manufactured solution with $R_e=1000$. $L^2$ relative error of the pressure at time $t=2$, for different trial and test spaces, for the iGRM method with a $20\times 20$ mesh. }}
\label{fig:L2pres}
\end{center}
\end{figure}

\revision{We also report the $L^2$ norm error for the pressure in Figure~\ref{fig:L2pres}.
We observe different convergence rates of the pressure for different trial and test spaces. }
If we focus on the time step size $\tau=\frac{1}{512}$ and perform 1024 time steps, we obtain the final $L^2$-errors for the pressure listed in Table \ref{tab:errors}. \revision{We have denoted by bold the spaces resulting in the lowest numerical error of the pressure.}
\begin{table*}[htp]
\begin{center}
\begin{tabular}{|c|c|c|c|c|c|c|}
\hline 
Symbol & Trial $U_h$ & Test $V_h$ & $p$-error & $\dim$(trial) & $\dim$(test) & time[s] \\
\hline
\includegraphics[scale=0.3]{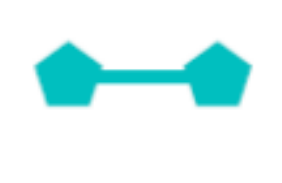} & 
$\left(\Spl^{3,0}_0\right)^2 \times \SplO^{3,0}$ & 
$\left(\Spl^{4,0}_0\right)^2 \times \SplO^{4,0}$ &  
	{\bf 0.022} & 11162 & 19683 & {\bf 2166} \\
\includegraphics[scale=0.3]{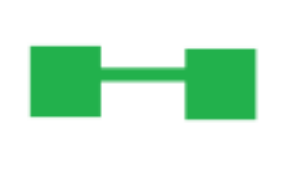} & 
$\left(\Spl^{3,0}_0\right)^2 \times \SplO^{2,0}$ &
$\left(\Spl^{4,0}_0\right)^2 \times \SplO^{3,0}$ &
{\bf 0.022} & 9123 & 16843 & {\bf 2010} \\
\includegraphics[scale=0.3]{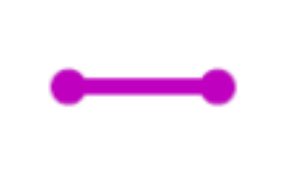} & 
$\left(\Spl^{3,1}_0\right)^2 \times \SplO^{3,1}$ &
$\left(\Spl^{4,0}_0\right)^2 \times \SplO^{4,0}$ &
0.11 & 5292 & 19683 & 1889 \\
\includegraphics[scale=0.3]{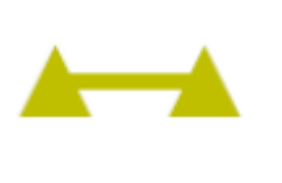} & 
$\left(\Spl^{3,1}_0\right)^2 \times \SplO^{3,1}$ &
$\left(\Spl^{4,1}_0\right)^2 \times \SplO^{4,1}$ &
0.11 & 5292 & 11532 &  1372 \\
\includegraphics[scale=0.3]{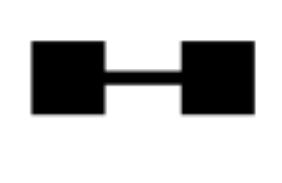} &  
$\left(\Spl^{3,2}_0\right)^2 \times \SplO^{3,2}$ &
$\left(\Spl^{4,0}_0\right)^2 \times \SplO^{4,0}$ &
0.046 & 1587 & 19683 & 1683\\
\includegraphics[scale=0.3]{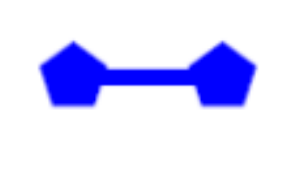} &  
$\left(\Spl^{3,2}_0\right)^2 \times \SplO^{3,2}$ &
$\left(\Spl^{4,1}_0\right)^2 \times \SplO^{4,1}$ &
0.045 & 1587 & 11532 & 854 \\
\includegraphics[scale=0.3]{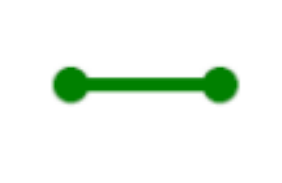} &  
$\left(\Spl^{3,2}_0\right)^2 \times \SplO^{3,2}$ &
$\left(\Spl^{4,2}_0\right)^2 \times \SplO^{4,2}$ &
0.043 & 1587 & 5547 & 329 \\ 
\includegraphics[scale=0.3]{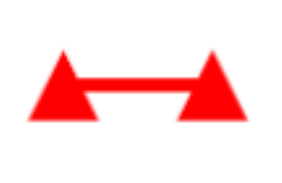} & 
$\left(\Spl^{3,2}_0\right)^2 \times \SplO^{2,1}$ &
$\left(\Spl^{4,2}_0\right)^2 \times \SplO^{3,1}$ &
{\bf 0.022} & 1542 & 5462 & {\bf 300}\\
\hline
\end{tabular}
\end{center}
\caption{\label{tab:errors} \revision{A Navier-Stokes problem with manufactured solution, with $R_e=1000$. } Relative $L^2$ pressure error, for time step size $\tau=\frac{1}{512}$, \revision{for the iGRM method }with a $20\times 20$ mesh.}
\label{tab:errors}
\end{table*}
We can also read from this table the dimensions of the trial and test spaces, as well as the total execution time for 1024 time steps. 

\revision{From Figures \ref{fig:L2relvel}-\ref{fig:L2pres} we can read that all the spaces result in the same numerical accuracy for the velocity, as measured in both $L^2$ and $H^1$ norms. We can also read that the best convergence of the $L^2$ error for pressure is obtained from 
\begin{itemize}
\item Standard finite element space trial $U_h = \left(\Spl^{3,0}_0\right)^2 \times \SplO^{3,0}$ and test
$V_h = \left(\Spl^{4,0}_0\right)^2 \times \SplO^{4,0}$ with equal-order approximation of the velocity and pressure;
\item Standard finite element space trial $U_h = \left(\Spl^{3,0}_0\right)^2 \times \SplO^{2,0}$ and test
$V_h = \left(\Spl^{4,0}_0\right)^2 \times \SplO^{3,0}$ with reduced order approximation of the pressure;
\item Higher continuity space trial $U_h = \left(\Spl^{3,2}_0\right)^2 \times \SplO^{2,1}$ and test
$V_h = \left(\Spl^{4,2}_0\right)^2 \times \SplO^{3,1}$ with reduced order approximation of the pressure.
\end{itemize}
From Table \ref{tab:errors} we can read that the higher continuity spaces result in a lower computational cost. }

\subsection{\inblue{CFL condition for Navier-Stokes simulation}}
\revision{We consider the Navier-Stokes problem with manufactured solution from section 4.3. We focus on the iGRM method to check the CFL condition for the simulation with $40\times 40$ elements and $Re=1000$. 
The simulation is performed over the time interval $[0,2]$, with time step sizes $\tau=2^{-\ell}$, for $\ell=3,...,8$. The results are summarized in Figure~\ref{fig:NScfl}. We conclude that for time steps $\tau=\frac{1}{8}$, $\tau=\frac{1}{16}$, $\tau=\frac{1}{32}$, and $\tau=\frac{1}{64}$, the simulations explodes sooner or later. The simulations with $\tau=\frac{1}{128}$ or $\tau=\frac{1}{256}$ are stable.}

\begin{figure}[htp]
\begin{center}
\includegraphics[scale=1.0]{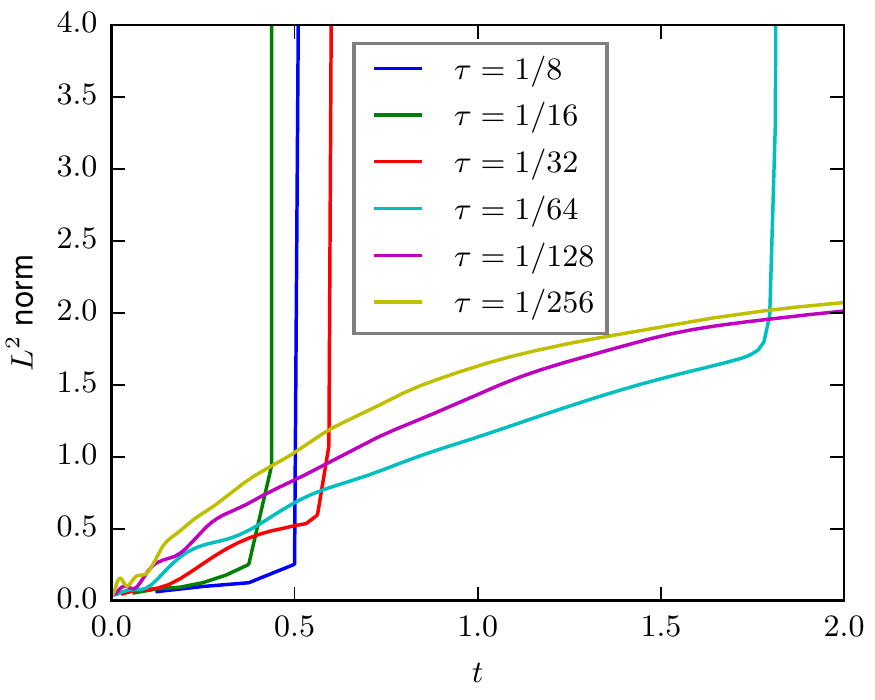} 
\caption{\revision{A Navier-Stokes problem with manufactured solution with $R_e=1000$.  } Evolution of the $L^2$-norm of the velocity over the time interval $[0,2]$ for different setups of time-steps $\tau$. \revision{We use the iGRM method on $40\times 40$ mesh, 
with trial $U_h = \left(\Spl^{3,2}_0\right)^2 \times \SplO^{3,2}$ and test $V_h = \left(\Spl^{4,2}_0\right)^2 \times \SplO^{4,2}$.}}
\label{fig:NScfl}
\end{center}
\end{figure}

\section{Conclusions}
\label{sec:conc}

We applied residual minimization procedures \revision{to non-stationary Stokes and Navier-Stokes problems} in simple, functional settings. We experiment with different polynomial order and continuity of both trial and test spaces.
For non-stationary Stokes and Navier-Stokes problems, we employed the splitting scheme originally proposed in \cite{Minev} for finite-difference simulations. We applied the residual minimization (RM) in every time step for stabilization. We obtained a linear computational cost solver for non-stationary Stokes and Navier-Stokes problems combining IGA and RM. We tested our method on cavity flow problems and problems having manufactured solutions.
\revision{We show that for high Reynolds number, the residual minimization method is required to stabilize the Navier-Stokes simulations.}
The RM method allowed to increase the Reynolds number for which the non-stationary schemes are stable.
\revision{We compared different combinations of trial and test spaces and how they affect the numerical accuracy of the Navier-Stokes problem.
We conclude that for the problem considered in this paper, the higher continuity spaces reduce the computational cost without compromising the solution's accuracy.
However, the standard finite element spaces can be augmented with $hp$-adaptivity \cite{t8}, resulting in exponential convergence of the numerical error with respect to the mesh size. In this paper, however, we advocated an alternative approach, where we employ linear computational cost alternating direction implicit solver on a tensor product grids.}
Future work may include developing an iterative algorithm based on the ideas presented in~\cite{Method1,Method2}.
We will also target the parallelization of the method as in~\cite{parallel1}, possibly using the decomposition of the solver algorithm into basic undividable tasks \cite{parallel2,parallel3}.
\inblue{It is also worth investigating if residual minimization can be applied in specific space-time formulations, preserving the Kronecker product structure and the solver's linear computational cost. In particular, the residual minimization method in space-time may require space-time norms if we intend to stabilize the problem in the space-time setup.}
Our future work will also extend this method to Maxwell problems~\cite{ maxwell1,maxwell2,maxwell3}.

\subsection*{Acknowledgments}
The work of Maciej Paszy\'nski and Marcin \L{}o\'s, and the visit of Judit Mu\~noz-Matute and Ignacio Muga in Krak\'ow is supported by National Science Centre, Poland grant no. 2017/26/M/ ST1/ 00281.
\inblue{Ignacio Muga has received funding from the European Union's Horizon 2020 research and innovation programme under the Marie Sklodowska-Curie grant agreement No 777778 (MATHROCKS). 
The authors also want to thank the Chilean Conicyt project PAI80160025, and the unknown reviewers for their insightful suggestions, which helped us to improve on this version of the manuscript.}

\end{document}